\documentclass[12pt]{amsart}
\usepackage{tikz-cd}
\usepackage{enumitem}
\setlist[description]{leftmargin=\parindent,labelindent=\parindent}
\usetikzlibrary{matrix,arrows,decorations.pathmorphing}
\usepackage{amssymb}
\usepackage{amsmath,amscd}
\usepackage{mathrsfs}
\usepackage{url}
\usepackage{cite}
\usepackage{fullpage}
\usepackage{hyperref}
\usepackage{verbatim}
\usepackage{comment}
\usepackage{fancyvrb}
\usepackage{fvextra}
\usepackage{caption}
\usepackage{ytableau}

\newtheorem{thm}{Theorem}[section]
\newtheorem{prop}[thm]{Proposition}
\newtheorem{lem}[thm]{Lemma}
\newtheorem{cor}[thm]{Corollary}

\theoremstyle{definition}

\newtheorem{example}[thm]{Example}
\newtheorem{rem}[thm]{Remark}

\numberwithin{equation}{section}

\newcommand{\Z}{\mathbb{Z}}

\newcommand{\zz}{\mathbb{Z}}
\newcommand{\cc}{\mathbb{C}}
\renewcommand{\ss}{\mathbb{S}}
\newcommand{\qq}{\mathbb{Q}}

\newcommand{\M}{\mathcal{M}}

\newcommand{\E}{\mathcal{E}}

\renewcommand{\tilde}{\widetilde}

\DeclareMathOperator{\Aut}{Aut}

\DeclareMathOperator{\SL}{SL}

\DeclareMathOperator{\Sym}{Sym}

\DeclareMathOperator{\tr}{tr}

\DeclareMathOperator{\sign}{sign}

\usepackage{wrapfig}

\newcommand{\Mb}{\overline{\M}}

\title{The eleventh cohomology group of $\Mb_{g,n}$}
\author{Samir Canning}
\author{Hannah Larson}
\author{Sam Payne}
\thanks{
S.C. was partially supported by NSF RTG grant DMS-1502651.
This research was partially conducted during the period H.L served as a Clay Research Fellow. S.P. was supported in part by NSF grants DMS-2001502 and DMS-2053261}

\begin{document}

\maketitle

\begin{abstract}
We prove that the rational cohomology group $H^{11}(\Mb_{g,n})$ vanishes unless $g = 1$ and $n \geq 11$.  We show furthermore that $H^k(\Mb_{g,n})$ is pure Hodge--Tate for all even $k \leq 12$ and deduce that $\# \Mb_{g,n}(\mathbb{F}_q)$ is surprisingly well approximated by a polynomial in $q$. In addition, we use $H^{11}(\Mb_{1,11})$ and its image under Gysin push-forward for tautological maps to produce many new examples of moduli spaces of stable curves with nonvanishing odd cohomology and non-tautological algebraic cycle classes in Chow cohomology.
\end{abstract}

\section{Introduction}
 The Langlands program makes a number of striking predictions about the Hodge structures and Galois representations that appear in the cohomology of moduli spaces of stable curves; see \cite[Section~1.2]{ChenevierRenard15} and \cite{BergstromFaberPayne, BergstromFaber}. While the conjectured correspondence with algebraic cuspidal automorphic representations of conductor 1 remains out of reach, these representations have been classified up to weight 22 \cite{ChenevierLannes}, and some of the resulting  predictions can now be verified unconditionally.  Bergstr\"om, Faber and the third author recently proved that $H^k(\Mb_{g,n})$ vanishes for all odd $k \leq 9$ and all $g$ and $n$ \cite{BergstromFaberPayne}.
For $k = 11$, the conjectural correspondence predicts that $H^{11}(\Mb_{g,n})$ is isomorphic to a direct sum of copies of $H^{11}(\Mb_{1,11})$ and hence should vanish in all cases where $\Mb_{g,n}$ is unirational.
We confirm this prediction unconditionally and show that $H^{11}(\Mb_{g,n})$ vanishes in an even wider range of cases.

\smallskip

\begin{thm}\label{H11}
The cohomology group $H^{11}(\Mb_{g,n})$ is nonzero if and only if $g = 1$ and $n \geq 11$. 
\end{thm}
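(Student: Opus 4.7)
The plan is to prove both directions of the theorem together by induction on the pair $(g,n)$. The foundation is Deligne's purity: since $\Mb_{g,n}$ is a smooth proper Deligne--Mumford stack, $H^{11}(\Mb_{g,n})$ is pure of Hodge weight $11$. Combined with the boundary stratification $\Mb_{g,n} = \bigsqcup_\Gamma \M_\Gamma$ by stable graphs (with $\M_\Gamma = \prod_{v \in V(\Gamma)} \M_{g(v),n(v)}$), the associated weight spectral sequence expresses $H^{11}(\Mb_{g,n})$ as a subquotient of a direct sum, over stable graphs $\Gamma$, of Tate-twisted pure pieces of $H^*(\M_\Gamma)$ in appropriate bidegree. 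K\"unneth decomposes each such piece along vertices of $\Gamma$. Using the Bergstr\"om--Faber--Payne vanishing $W_j H^j(\M_{g',n'}) = 0$ for all odd $j \leq 9$ together with the inductive hypothesis, I would reduce to the following observation: only stable graphs containing at least one vertex $v_0$ with $g(v_0) = 1$ and $n(v_0) \geq 11$ can produce a nonzero pure weight-$11$ contribution.

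For the non-vanishing when $g = 1$, $n \geq 11$: the Eichler--Shimura isomorphism, applied to the universal elliptic curve over $\M_{1,1}$, produces a two-dimensional Hodge structure of type $(11,0)+(0,11)$ inside $W_{11}H^{11}(\M_{1,11})$ coming from the weight-$12$ cusp form $\Delta$. This class lifts to $H^{11}(\Mb_{1,11})$ via the surjection $H^{11}(\Mb_{1,11}) \twoheadrightarrow W_{11}H^{11}(\M_{1,11})$ supplied by Deligne's theorem. For $n \geq 12$, the forgetful map $\Mb_{1,n} \to \Mb_{1,11}$ is flat and proper with geometrically connected fibers, so pullback is injective on rational cohomology, yielding nonzero $H^{11}(\Mb_{1,n})$.

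For the vanishing when $g = 1$ and $n < 11$: a direct combinatorial check rules out any stable graph of total genus $1$ with fewer than $11$ marked legs containing a vertex carrying $11$ or more half-edges and marked points. Indeed, such a graph is necessarily a tree with a single genus-$1$ vertex $v_0$, and stability forces each of the $|V|-1$ other (genus-$0$) vertices to carry at least $3$ half-edges; counting $\sum n(v) = 2(|V|-1) + n$ and subtracting $n(v_0) \geq 11$ forces $|V| \leq n - 10$, contradicting $n < 11$. Combined with the vanishing of $W_{11}H^{11}(\M_{1,n})$ for $n < 11$, a consequence of Getzler's formulas, this settles the $g=1$ case.

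The main obstacle is the vanishing for $g \geq 2$. In this range, stable graphs with a $(1, \geq 11)$-vertex genuinely occur as boundary strata (for instance, glue a smooth genus-$(g-1)$ curve to a $(1, 11)$-component at a node and distribute the remaining marked points on the higher-genus component). Showing that all such weight-$11$ contributions cancel in $E_\infty$ is the heart of the proof. I would attempt this by exploiting the companion Hodge--Tate statement for $H^{\mathrm{even} \leq 12}$: the non-$\Delta$ vertices can only contribute pure Hodge--Tate classes, while the $\Delta$-vertex contributes a pure $(11,0)+(0,11)$ summand. A careful Hodge-type analysis, together with the combinatorics of graph automorphism groups and the symmetric-group representations on the marked points, should force the contributions from distinct graphs to cancel via the spectral sequence differentials, leaving no pure weight-$11$ class on $\Mb_{g,n}$ for $g \geq 2$.
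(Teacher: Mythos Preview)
Your framework is broadly aligned with the paper's approach: both use the boundary stratification and the associated complex (what the paper calls the weight-$11$ row of the $E_1$-page, or equivalently the Feynman transform), together with the vanishing of $H^{\leq 9}_{\mathrm{odd}}$ and K\"unneth. Your treatment of the non-vanishing for $g=1$, $n\geq 11$ and the vanishing for $g=1$, $n<11$ is essentially correct. One technical point: the relevant complex in the paper has terms $H^{11}(\Mb_\Gamma)$ (compactified strata), not Tate-twisted pure pieces of the open strata $H^*(\M_\Gamma)$; your description of the spectral sequence as computing $H^{11}(\Mb_{g,n})$ directly is slightly off, since Deligne's weight spectral sequence computes the associated graded of the open part. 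The paper instead uses that $H^{11}(\Mb_{g,n})$ injects into $H^{11}(\tilde{\partial\M}_{g,n})$ (Arbarello--Cornalba) and then studies the next map in the complex.

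The genuine gap is in the $g\geq 2$ vanishing, which you correctly identify as the heart of the theorem but do not actually prove. Saying that a ``careful Hodge-type analysis\ldots should force the contributions from distinct graphs to cancel via the spectral sequence differentials'' is a hope, not an argument, and there is no a priori reason such cancellation must occur. The paper supplies a concrete mechanism: it shows that the second map $\beta$ in the complex
\[
H^{11}(\Mb_{g,n}) \xrightarrow{\alpha} \bigoplus_{|E(\Gamma)|=1} H^{11}(\Mb_\Gamma)^{\Aut\Gamma} \xrightarrow{\beta} \bigoplus_{|E(\Gamma)|=2} \bigl(H^{11}(\Mb_\Gamma)\otimes\det E(\Gamma)\bigr)^{\Aut\Gamma}
\]
is injective; since $\beta\circ\alpha=0$ and $\alpha$ is injective, this forces $H^{11}(\Mb_{g,n})=0$. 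For $g\geq 3$, injectivity of $\beta$ follows by induction: after applying the inductive hypothesis, the only surviving one-edge graphs have a genus-$1$ vertex with $\geq 11$ half-edges joined to a genus-$(g-1)$ vertex, and attaching a loop at the latter vertex produces a two-edge graph into which $\beta$ maps each such summand isomorphically. The base case $g=2$ is not formal and requires real work: the paper uses its explicit identification of $H^{11,0}(\Mb_{1,m})$ with the Specht module $V_{m-10,1^{10}}$, together with the formulas for pullback to boundary divisors, to verify injectivity of $\beta$ by a case-by-case analysis of one-edge and two-edge graphs. Your proposal contains no analogue of either step.
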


\smallskip

For $g = 1$ and $n \geq 11$, $H^{11}(\Mb_{1,n})$ is isomorphic to a direct sum of $\binom{n-1}{10}$ copies of $H^{11}(\Mb_{1,11})$ \cite{Getzler}; in particular, it decomposes as $H^{11,0}(\Mb_{1,n}) \oplus H^{0,11}(\Mb_{1,n})$. We show that  $H^{11,0}(\Mb_{1,n})$ is generated by the pullbacks of the distinguished generator of $H^{11,0}(\Mb_{1,11})$, which corresponds to the weight 12 cusp form $\Delta$ for $\SL_2(\Z)$, under the $\binom{n}{11}$ forgetful maps, and describe the relations among these generators.  In this way, we show that $H^{11,0}(\Mb_{1,n})$ is an irreducible $\ss_{n}$-representation isomorphic to the Specht module $V_{n-10,1^{10}}$. 

Next, we address the Hodge structures and Galois representations that appear in other low degrees.  The Langlands program predicts that the cohomology of $\Mb_{g,n}$ should be pure Hodge--Tate in all even degrees less than or equal to 20. This prediction was previously confirmed only in the cases where these cohomology groups are known to be generated by tautological classes, e.g. for $g \leq 2$ \cite{Keel, Getzler, Petersen}, 
for $k \leq 2$ \cite{ArbarelloCornalba}, and for $k = 4$ and $g \geq 8$ \cite{Polito}.  Our second result extends the confirmation of this prediction to a much wider range of cases. The proof is a double induction on $g$ and $n$. The base cases are given by recent results of the first two authors, who  showed that $H^*(\Mb_{g,n})$ is tautological for $g \geq 3$ and $2g + n \leq 14$ \cite{CL-CKgP}.

\smallskip

\begin{thm} \label{even}
For any even $k \leq 12$, the cohomology group $H^{k}(\Mb_{g,n})$ is pure Hodge--Tate.
\end{thm}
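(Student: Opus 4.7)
The plan is a double induction, primary on $g$ and secondary on $n$. The base cases are supplied by the tautological generation results cited in the introduction: $g \leq 2$ for all $n$ (Keel, Getzler, Petersen), and $g \geq 3$ with $2g + n \leq 14$ \cite{CL-CKgP}. Since tautological classes are algebraic of Hodge type $(p,p)$, the corresponding cohomology groups are automatically pure Hodge--Tate in every degree.

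For the inductive step, I would use the short exact sequence of mixed Hodge structures
\[ 0 \longrightarrow B^k \longrightarrow H^k(\Mb_{g,n}) \longrightarrow W_k H^k(\M_{g,n}) \longrightarrow 0 \]
coming from the open embedding $\M_{g,n} \hookrightarrow \Mb_{g,n}$, where $B^k$ is the image of the Gysin push-forward from the normalized boundary. Each irreducible boundary divisor is a finite-group quotient of either $\Mb_{g-1,n+2}$ or a product $\Mb_{g_1,n_1} \times \Mb_{g_2,n_2}$, all with parameters strictly smaller in the induction order. A contribution to $B^k$ in even degree $k \leq 12$ comes from $H^{k-2}$ of such a boundary divisor, which has even degree at most $10$. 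By K\"unneth and the inductive hypothesis, each such group decomposes into tensor products of groups in even degree $\leq 10$ (Hodge--Tate by induction) or odd degree $\leq 9$ (zero by \cite{BergstromFaberPayne}); passing to invariants under a finite group preserves Hodge--Tate structure. Thus $B^k$ is pure Hodge--Tate, and the remaining task is to show that $W_k H^k(\M_{g,n})$ is Hodge--Tate as well.

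The hard step is controlling this cokernel $W_k H^k(\M_{g,n})$, the pure part of the cohomology of the open moduli space. For $k \leq 2$ this is tautological by Arbarello--Cornalba, and for $k = 4$ with $g \geq 8$ by Polito, but for higher even $k \leq 12$ it becomes genuinely delicate because tautological generation is not known in general. I expect the argument to combine an analysis of the top-weight piece $\mathrm{gr}^W_{2k} H^k(\M_{g,n})$ via the tropical description of Chan--Galatius--Payne with a careful treatment of intermediate weight pieces through the weight spectral sequence, and to exploit Theorem~\ref{H11} together with the low odd-degree vanishing of \cite{BergstromFaberPayne} to constrain which Hodge types can appear. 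The end goal is to show that every class in $W_k H^k(\M_{g,n})$ for even $k \leq 12$ has Hodge type $(k/2,k/2)$, ideally by exhibiting it as tautological, closing the double induction.
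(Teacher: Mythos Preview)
Your proposal has a genuine gap at exactly the point you identify as ``the hard step.'' You set up the Gysin push-forward sequence and correctly argue that the image $B^k$ is Hodge--Tate by induction, but then you have no argument for the cokernel $W_k H^k(\M_{g,n})$: the paragraph invoking Chan--Galatius--Payne, the weight spectral sequence, and Theorem~\ref{H11} is speculation, not a proof, and nothing in that toolkit is known to force the pure weight~$k$ part of $H^k(\M_{g,n})$ to be Hodge--Tate for general even $k \leq 12$.

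The paper avoids this difficulty entirely by running the induction in the \emph{opposite} direction. Rather than pushing forward from the boundary and studying the cokernel, one pulls back: the restriction map $H^k(\Mb_{g,n}) \to H^k(\tilde{\partial \M}_{g,n})$ is \emph{injective} once $2g-2+n > k$, because $H^k_c(\M_{g,n}) = 0$ in that range (this is Proposition~\ref{compactsupport}, the cohomological-dimension bound going back to Harer, combined with Arbarello--Cornalba's Lemma~\ref{injective}). Since the normalized boundary is built from $\Mb_{g-1,n+2}$ and products $\Mb_{g_1,n_1+1} \times \Mb_{g_2,n_2+1}$ with strictly smaller parameters, K\"unneth and the vanishing of odd cohomology in degrees $\leq k/2 \leq 6$ show its $H^k$ is Hodge--Tate by induction, and the injection finishes the argument. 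The upshot is that you never need to touch $W_k H^k(\M_{g,n})$ at all. (A minor side note: your base case claim for $g=2$ overreaches---Petersen's tautological result is only for $n < 20$, but the paper's base cases only require $2g-2+n \leq 12$, so this is harmless once the induction is set up correctly.)
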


\noindent It remains an open problem whether $H^k(\Mb_{g,n})$ is generated by algebraic cycle classes for even $4 \leq k \leq 12$.

\smallskip

As an application of these two theorems, we show that the point count $\#\Mb_{g,n}(\mathbb{F}_q)$ is surprisingly well-approximated by a polynomial in $q$.

\smallskip

\begin{cor} \label{almost}
Assume $g \geq 2$, and let $d = \dim \Mb_{g,n}$. Then 
\[ \#\Mb_{g,n}(\mathbb{F}_q) = \sum_{i=0}^6 h^{2i}(\Mb_{g,n}) q^{d-i} + O(q^{d-13/2}).\]
\end{cor}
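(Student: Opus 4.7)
The plan is to use the Grothendieck--Lefschetz trace formula for the smooth proper Deligne--Mumford stack $\Mb_{g,n}/\mathbb{F}_q$,
\[
\#\Mb_{g,n}(\mathbb{F}_q) \;=\; \sum_{k=0}^{2d} (-1)^k \, \tr\bigl(\mathrm{Frob}_q \bigm| H^k_{\text{\'et}}(\Mb_{g,n,\bar{\mathbb{F}}_q}, \mathbb{Q}_\ell)\bigr),
\]
and to split the sum into a top piece $k \geq 2d-12$, which should reproduce the explicit polynomial part, and a tail $k \leq 2d-13$, which will be absorbed into the error term via Deligne's purity theorem. Poincar\'e duality $H^{2d-k} \cong H^k(-d)^\vee$ translates the low-degree hypotheses into information about the top piece.

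For the top piece, the odd-degree vanishing from \cite{BergstromFaberPayne} (for $k \leq 9$) combined with Theorem \ref{H11} (which applies because $g \geq 2$) kills all the odd indices $k \in \{2d-11, 2d-9, \ldots, 2d-1\}$. For the even indices $k = 2d-2i$ with $0 \leq i \leq 6$, Theorem \ref{even} says that $H^{2i}(\Mb_{g,n})$ is pure Hodge--Tate; granting the accompanying statement that Frobenius acts by $q^i$ on $H^{2i}_{\text{\'et}}(\Mb_{g,n,\bar{\mathbb{F}}_q}, \mathbb{Q}_\ell)$, Poincar\'e duality gives that Frobenius acts by $q^{d-i}$ on $H^{2d-2i}_{\text{\'et}}$, and $h^{2d-2i} = h^{2i}$. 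Summing over $i = 0,\ldots,6$ yields $\sum_{i=0}^6 h^{2i}(\Mb_{g,n}) q^{d-i}$.

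The tail piece is handled by Deligne purity: for $k \leq 2d-13$, every Frobenius eigenvalue on $H^k$ has absolute value at most $q^{k/2} \leq q^{d-13/2}$. Since the total Betti number $\sum_k h^k(\Mb_{g,n})$ is a constant depending only on $g$ and $n$, these terms contribute $O(q^{d-13/2})$ overall.

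The main obstacle is the transfer, within the top piece, from Theorem \ref{even}'s Hodge-theoretic conclusion to the $\ell$-adic statement that Frobenius acts by $q^{k/2}$. This is not automatic for a general smooth proper variety; for $\Mb_{g,n}$, however, one expects it to follow from the proof of Theorem \ref{even} itself, since the double induction on $g$ and $n$ proceeds through the boundary stratification by products of smaller moduli spaces, and such an argument should operate uniformly on Hodge and $\ell$-adic realizations. If any gap remains, it can be closed via $p$-adic Hodge theory, exploiting that $\Mb_{g,n}$ has good reduction at every prime.
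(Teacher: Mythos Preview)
Your proof is correct and follows essentially the same route as the paper: Behrend's trace formula, Deligne's bound $|\alpha| = q^{k/2}$ on Frobenius eigenvalues to control the tail $k \leq 2d-13$, Poincar\'e duality together with the odd-degree vanishing results and Theorem~\ref{H11} to kill the odd top terms, and Theorem~\ref{even} plus Poincar\'e duality to identify the even top terms as $h^{2i}q^{d-i}$.

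The ``obstacle'' you flag at the end is not actually an issue in the paper's setup: by the Notations and Conventions, $H^k(\Mb_{g,n})$ denotes either the $\qq$-Hodge structure or the $\ell$-adic Galois representation, and ``pure Hodge--Tate'' is defined to mean a sum of realizations of $\mathsf{L}^i$ in \emph{either} sense. The inductive proof of Theorem~\ref{even} uses only excision, pullback to the boundary normalization, and K\"unneth, all of which hold in both categories, so the $\ell$-adic conclusion that Frobenius acts by $q^i$ on $H^{2i}$ is part of the statement of Theorem~\ref{even} itself rather than something requiring an additional comparison argument.
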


\begin{rem}
The point count $\#\Mb_{0,n}(\mathbb{F}_q)$ is a polynomial in $q$, for all $n$, as is $\# \Mb_{1,n}(\mathbb{F}_q)$ for $n \leq 10$.  For $n \geq 11$, $\# \Mb_{1,n}(\mathbb{F}_q)$ was determined by Getzler \cite{Getzler}; it has an approximation to order $O(q^{n-13/2})$ by a polynomial in $q$ minus the correction term $\binom{n-1}{10} \tau(q)$, where $\tau(q)$ denotes the coefficient of $q$ in the Fourier expansion of the weight $12$ cusp form $\Delta$ for $\SL_2(\zz)$.
\end{rem} 

Unlike the cohomology groups in smaller odd degrees, $H^{13}(\Mb_{g,n})$ is nonvanishing in a wide range of cases, including for large $g$, as are all higher degree odd cohomology groups.  Indeed, Pikaart showed that $H^{13}(\Mb_{g,n})$ is nonvanishing for $n \geq 10$ and $g$ sufficiently large, as is $H^{33}(\Mb_g)$ \cite{Pikaart}. Similar nonvanishing statements in higher degrees follow immediately, by Hard Lefschetz. The bounds on $g$ that come from Pikaart’s method are large, typically in the thousands. For instance, van Zelm computes that Pikaart’s method yields $H^{33}(\Mb_{g,1}) \neq 0$ for $g \geq 8069$.  The bounds for $H^{13}(\Mb_{g,10})$ and $H^{33}(\Mb_{g})$ are not explicitly stated in the literature, but there is substantial evidence that such bounds should be far from optimal. While Pikaart’s constructions prove the existence of non-tautological algebraic cycle classes on $\Mb_g$ for $g \geq 16192$, van Zelm proved that this holds for $g \geq 12$ \cite{vanZelm}. Also, Bergstr\"om and Faber have recently shown that $H^{13}(\Mb_{2,n}) \neq 0$ for $n \geq 10$. They also prove that the nonvanishing of $H^{13}(\Mb_{3,n})$ for $n \geq 10$ follows from conjectural parts of the Langlands correspondence.  Here, we prove the latter statement unconditionally, extend it to all higher genera, and also improve Pikaart’s bound for the nonvanishing of $H^{33}(\Mb_g)$.

\smallskip

\begin{thm}\label{higher}
Assume $k \leq 11$. Let $g_1,\dots,g_k$ be distinct positive integers, and set $g=1 + g_1+\cdots+g_k$. Then  
$$
H^{11+2k}(\Mb_{g,n})\neq 0 \mbox{ \ for } n \geq 11 - k.
$$ 
In particular, $H^{13}(\Mb_{g,n})\neq 0$ for $g\geq 2$ and $n\geq 10$, and $H^{33}(\Mb_g)\neq 0$ for $g\geq 67$.
\end{thm}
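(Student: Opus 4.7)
The strategy is to produce explicit nonzero classes in $H^{11+2k}(\Mb_{g,n})$ by Gysin push-forward from $H^{11}(\Mb_{1,11})$ along a deep boundary stratum, invoking Theorem~\ref{H11} as input. Label the marked points of $\Mb_{1,11}$ as $1,\ldots,11$, and consider the gluing map
\[
\xi \colon \Mb_{1,11} \times \Mb_{g_1,1} \times \cdots \times \Mb_{g_k,1} \longrightarrow \Mb_{g,\,11-k}
\]
that attaches the $i$-th marked point of the genus-one component to the unique marked point of the genus-$g_i$ component, leaving $11-k$ free marked points on the genus-one piece. Since $g_1,\ldots,g_k$ are distinct, $\xi$ has no permutation automorphisms and lands in a codimension-$k$ boundary stratum. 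By Theorem~\ref{H11} we may pick a nonzero $\omega \in H^{11}(\Mb_{1,11})$ and form $\eta := \omega \boxtimes 1 \boxtimes \cdots \boxtimes 1$ on the product.

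The heart of the argument is to show that $\xi_*\eta \neq 0$. For this, apply the self-intersection formula: the normal bundle of $\xi$ splits as $\bigoplus_{i=1}^{k} \L_i' \otimes \L_i''$, where $\L_i'$ and $\L_i''$ are the tangent-line bundles at the attaching points on the two factors, so
\[
\xi^*\xi_*\eta \; = \; \eta \cdot \prod_{i=1}^{k}\bigl(-\psi_i' - \psi_i''\bigr),
\]
with $\psi_i' \in H^2(\Mb_{1,11})$ and $\psi_i'' \in H^2(\Mb_{g_i,1})$. Expanding by K\"unneth, the summand indexed by the empty subset is $(-1)^k\, \omega \boxtimes \psi_1'' \boxtimes \cdots \boxtimes \psi_k''$, lying in the K\"unneth factor $H^{11}(\Mb_{1,11}) \otimes \bigotimes_{i} H^2(\Mb_{g_i,1})$. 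This summand is nonzero because $\omega \neq 0$ and each $\psi_i''$ is a known nonzero class on $\Mb_{g_i,1}$ (e.g.\ $24\,\psi_1 = [\mathrm{pt}]$ on $\Mb_{1,1}$). Summands for distinct subsets $T \subseteq \{1,\ldots,k\}$ live in distinct K\"unneth factors and therefore cannot cancel, so $\xi^* \xi_*\eta \neq 0$ and consequently $\xi_*\eta \neq 0$ in $H^{11+2k}(\Mb_{g,11-k})$. To promote to $n > 11-k$, pull back along the forgetful map $\Mb_{g,n} \to \Mb_{g,11-k}$, which is a smooth proper morphism of smooth proper Deligne--Mumford stacks with geometrically connected fibers, hence induces an injection on rational cohomology.

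The two numerical consequences come from specializing the choice of $g_i$'s. For $H^{13}(\Mb_{g,n})\neq 0$ with $g\geq 2$ and $n\geq 10$, take $k=1$ and $g_1 = g-1 \geq 1$. For $H^{33}(\Mb_g)\neq 0$ with $g\geq 67$, take $k=11$, $n=0$, and $(g_1,\ldots,g_{11}) = (1,2,\ldots,10,\, g-56)$; distinctness of the $g_i$'s then forces $g-56 \geq 11$, i.e.\ $g\geq 67$.

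The main technical obstacle lies in the non-vanishing argument of the second paragraph. The self-intersection computation is conceptually clean, but subtleties arise from the Deligne--Mumford stack structure of the boundary stratum (notably the generic elliptic involution of $\Mb_{1,1}$ when some $g_i = 1$, and the node automorphism groups). These subtleties modify the self-intersection formula only by nonzero rational scalars and so do not disturb the K\"unneth-based non-vanishing conclusion; one simply has to track them carefully to ensure the relevant summand is not accidentally killed.
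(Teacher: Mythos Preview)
Your strategy matches the paper's: push forward the holomorphic $11$-form along the gluing map $\xi$ and detect nonvanishing via $\xi^*\xi_*$. Your K\"unneth non-cancellation argument is a clean alternative to the paper's device once the self-intersection formula is in hand. However, there is a genuine gap in applying that formula: the gluing map $\xi$ is \emph{not} a closed immersion of stacks when some $g_i = 1$, and the obstruction is not the elliptic involution you name. Concretely, take $g_1 = 1$ and let the $\Mb_{1,11}$ factor degenerate to a smooth elliptic tail $E'$ attached to a rational component carrying all $11$ marked points. After gluing, the resulting stable curve has two valence-one elliptic tails---$E'$ and the $\Mb_{1,1}$-factor $E_1$---hanging off a central rational vertex; swapping their roles gives a \emph{distinct} point of $\Mb_\Gamma$ with the same image in $\Mb_{g,11-k}$. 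Thus $\xi$ is $2$-to-$1$ over this locus, the image stratum acquires a self-intersection, and the push--pull formula picks up excess-intersection terms that are not merely nonzero scalars. Since the case $g_1 = 1$ is exactly what is needed for both numerical consequences (for $H^{13}(\Mb_{2,n})$ one must take $g_1 = 1$; the bound $g \geq 67$ rather than $g \geq 78$ for $H^{33}$ also forces $g_1 = 1$), this cannot be sidestepped.

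The paper resolves this by passing to an open substack. It replaces the first factor by $\M_{1,11}$ and each $\Mb_{g_i,1}$ by its interior $\M_{g_i,1}$ (keeping $\Mb_{1,1}$ compact when $g_1 = 1$, and allowing the corresponding outer vertex to degenerate to a genus-$0$ vertex with a self-loop), obtaining an open $U \subset \Mb_\Gamma$; it then exhibits an open $V_{g,11-k} \subset \Mb_{g,11-k}$ such that the restricted map $i \colon U \hookrightarrow V_{g,11-k}$ is a genuine codimension-$k$ closed embedding---this is where distinctness of the $g_i$ is actually used. On $U$ the paper invokes the vanishing $H^{13}(\M_{1,11}) = 0$ due to Getzler to kill every term containing a $\psi_j$ from the genus-one factor, leaving only $(-1)^k \beta \otimes \psi_{g_1} \otimes \cdots \otimes \psi_{g_k}$ with $\beta = \omega|_{\M_{1,11}} \neq 0$. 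Your K\"unneth separation argument would in fact also work on $U$ and would avoid the appeal to $H^{13}(\M_{1,11}) = 0$; either way, the essential missing step in your write-up is the restriction to $U$.
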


\smallskip

As a further application, we prove the existence of non-tautological classes in the Chow rings $A^*(\Mb_{g,n})$ in a number of cases where this was not previously known.

\smallskip

\begin{cor}\label{nontautological}
Consider $\Mb_{g,n}$ as a stack over $\cc$. For any $(g,n)$ as in Theorem \ref{higher}, the quotient $A^*(\Mb_{g,n})/R^*(\Mb_{g,n})$ is uncountable, as is the subgroup of $A^*(\Mb_{g,n})$ generated by cycles algebraically equivalent to zero.
\end{cor}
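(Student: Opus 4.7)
The plan is to derive both claims of Corollary \ref{nontautological} from Theorem \ref{higher} by combining the non-vanishing of odd cohomology with a Hodge-theoretic analysis of the constructed classes.

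The first step is to extract from the proof of Theorem \ref{higher} that the classes witnessing the non-vanishing of $H^{11+2k}(\Mb_{g,n})$ arise as Gysin push-forwards, along a codimension-$k$ tautological clutching map $\xi\colon \Mb_{1,n+k} \times \prod_{i=1}^k \Mb_{g_i,1} \to \Mb_{g,n}$, of the cusp form class in $H^{11}(\Mb_{1,n+k})$, which has Hodge types $(11,0)$ and $(0,11)$. Because Gysin push-forward shifts Hodge bidegrees by $(k,k)$, this produces a non-Hodge--Tate sub-Hodge-structure with types $(11+k,k)$ and $(k,11+k)$ inside $H^{11+2k}(\Mb_{g,n})$.

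For the uncountability of $A^*/R^*$: the rational tautological ring $R^*(\Mb_{g,n})_\qq$ is finitely generated as a $\qq$-algebra, hence a countable subgroup of $A^*(\Mb_{g,n})_\qq$, and its image under the cycle class map is Hodge--Tate. The non-Tate sub-Hodge-structure identified above forces the intermediate Jacobian $J^{k+6}(\Mb_{g,n})$ to have positive complex dimension. Via the Abel--Jacobi map and a Bloch--Srinivas-style spreading-out argument for a universal family, I would produce uncountably many homologically trivial Chow classes with distinct Abel--Jacobi invariants, giving an uncountable subgroup of $A^*(\Mb_{g,n})_\qq$ and hence an uncountable quotient $A^*/R^*$.

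For the uncountable subgroup generated by algebraically trivial cycles: I would construct these explicitly as Gysin push-forwards along $\xi$ of algebraically trivial cycles on the source product, for instance differences of sections of the universal curve over $\Mb_{1,n+k}$ or suitable Ceresa-type cycles in the universal genus-$g_i$ fibers. Algebraic triviality is preserved under proper push-forward, so the resulting cycles lie in the claimed subgroup of $A^*(\Mb_{g,n})$. To show that an uncountable quotient survives modulo rational equivalence, I would pair with the non-Hodge--Tate classes above, invoking recent techniques such as those of van Zelm \cite{vanZelm} that translate Hodge-theoretic non-vanishing into statements about Chow cycles modulo algebraic and rational equivalence.

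The main obstacle is the second claim: producing an uncountable family of algebraically trivial cycles that remain pairwise non-rationally-equivalent after Gysin push-forward. The non-Tate Hodge structure we constructed has Hodge level $11$, so the algebraic part of the intermediate Jacobian vanishes and a naive Abel--Jacobi argument does not apply; one must instead invoke a finer transcendental or family-theoretic invariant to separate the push-forwards, which is where the technical content resides.
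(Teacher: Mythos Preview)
Your approach is substantially more complicated than the paper's and contains a genuine gap, essentially the one you flag at the end.

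The paper's proof is nearly immediate once one invokes a theorem of Kimura and Totaro (stated as Theorem~\ref{cycleclass}): for a smooth proper Deligne--Mumford stack $X$ over $\cc$, if $A^*(X)_\qq$ is countable then the cycle class map $A^*(X)\to H^*(X)$ is an isomorphism. Taking the contrapositive, the existence of \emph{any} odd cohomology---supplied by Theorem~\ref{higher}---forces $A^*(\Mb_{g,n})$ to be uncountable. Since $R^*(\Mb_{g,n})$ is finite-dimensional over $\qq$ by Graber--Pandharipande, the quotient $A^*/R^*$ is uncountable. For the second claim the paper simply observes that the Hilbert scheme has only countably many connected components, so there are only countably many algebraic equivalence classes; hence if $A^*$ is uncountable, the subgroup of cycles algebraically equivalent to zero is already uncountable. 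No Hodge-type analysis of the pushed-forward class, no intermediate Jacobians, and no explicit construction of algebraically trivial cycles are needed.

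Your proposed route via intermediate Jacobians and Abel--Jacobi maps runs into exactly the obstacle you identify: the non-Tate piece produced by $\xi_*$ has Hodge level $11$, so it contributes nothing to the algebraic part of the intermediate Jacobian, and the Abel--Jacobi map restricted to algebraically trivial cycles lands in that algebraic part. Your appeal to ``techniques such as those of van Zelm'' does not close the gap---van Zelm constructs explicit bielliptic classes that are non-tautological \emph{in cohomology}, which is a different phenomenon and does not separate algebraically trivial cycles modulo rational equivalence. Likewise, a Bloch--Srinivas spreading argument of the sort you sketch is precisely what underlies the Kimura--Totaro theorem; you are proposing to rederive that result rather than cite it. The clean path is to invoke the contrapositive of Kimura--Totaro directly, and then use countability of algebraic equivalence classes for the second assertion.
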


\smallskip

\noindent This provides many new examples of $(g,n)$ for which  $A^*(\Mb_{g,n})$ is not tautological. In particular, the existence of non-tautological Chow classes is new for $(4,n)$ with $9\leq n\leq 16$; for $(5,n)$ with $9\leq n\leq 13$; for $(6,n)$ with $9\leq n\leq 11$; and for $(7,8)$ and $(7,9)$. 

Existence results for non-tautological classes come in two flavors. There are cases where one can write down \emph{explicit} examples of non-tautological Chow classes. Graber and Pandharipande gave the first such example when $(g,n)=(2,20)$ \cite{GraberPandharipande}. Van Zelm generalized their example to show the existence of explicit non-tautological Chow classes on $\Mb_{g,n}$ for $2g+n\geq 24$ and $g\geq 2$ \cite{vanZelm}. A nice feature of these examples is that they are non-tautological in both Chow and cohomology. There are also the \emph{inexplicit} non-tautological Chow classes, which arise from the existence of odd cohomology. The first such examples are for $g=1, n\geq 11$, where the existence of a holomorphic 11-form implies that $A_{0}(\Mb_{1,n})$ is infinite dimensional \cite{Roitman}. Bergstr\"om and Faber showed that there is odd cohomology on $\Mb_{2,n}$ for $n\geq 10$
\cite{BergstromFaber}, implying that there are non-tautological Chow classes as well, by results of Kimura and Totaro \cite{Kimura, TotaroCKgP}; see Theorem \ref{cycleclass}. The examples provided by Corollary~\ref{nontautological} are also of this inexplicit form. We do not know whether all of the non-tautological classes in these inexplicit cases are homologically equivalent to zero.

\subsection{Methods}
Arbarello and Cornalba introduced an inductive method for studying cohomology groups of $\Mb_{g,n}$ and applied this to prove the vanishing of $H^k(\Mb_{g,n})$ for $k \in \{1,3,5\}$ \cite{ArbarelloCornalba}. The same method was used to prove vanishing for $k \in \{7,9\}$ after establishing the additional base cases needed to run the induction, via point counting over finite fields \cite{BergstromFaberPayne}.  Our proof of Theorems~\ref{H11} and \ref{even} start from the observation that the same induction can be used to control the Hodge structures and Galois representations that appear in $H^k(\Mb_{g,n})$ even when these groups do not vanish.  The first two authors recently established the base cases needed for $k \leq 12$ \cite{CL-CKgP}. Running this induction when $k$ is even leads directly to Theorem~\ref{even}.Doing so for $k = 11$ shows that $H^{11}(\Mb_{g,n})$ injects into a direct sum of copies of $H^{11}(\Mb_{1,11})$ (Proposition~\ref{s12}).  This is enough to confirm the prediction that $H^{11}(\Mb_{g,n})$ vanishes whenever $\Mb_{g,n}$ is unirational, but a different argument is needed to prove that it vanishes whenever $g \geq 2$.

The Arbarello--Cornalba induction uses the excision sequence for the pair of $\Mb_{g,n}$ with its boundary $\partial \M_{g,n}:=\Mb_{g,n}\smallsetminus \M_{g,n}$, along with the map
\begin{equation}\label{map1}
H^{k}(\Mb_{g,n})\rightarrow H^{k}(\tilde{\partial \M}_{g,n}).
\end{equation}
given by pullback to the normalization of the boundary.  Our proof of Theorem~\ref{H11} uses the observation that \eqref{map1} is the first arrow in a natural chain complex whose $j$th term may be identified with the cohomology of the normalization of the closure of the codimension $j$ boundary strata with coefficients in a natural local system, the determinant of the permutation representation on the branches of the boundary divisor.  This complex has several natural interpretations: it is the $k$th weight-graded piece of the Feynman transform of the modular operad that takes the value $\qq$ for every $(g,n)$ \cite{GetzlerKapranov}. It is also the weight $k$ row in the $E_1$-page of a natural spectral sequence obtained via Poincar\'e duality from Deligne's weight spectral sequence for the pair $(\Mb_{g,n}, \partial \M_{g,n})$ and has a natural interpretation as a decorated graph complex \cite[Section~2.3]{PayneWillwacher21}.

For $k = 11$, we examine the first two maps in this complex.  Assuming that  $H^{11}(\Mb_{2,n})$ vanishes for all $n$, a double induction on $g$ and $n$ shows that $H^{11}(\Mb_{g,n})$ vanishes whenever $g > 2$. In Section~\ref{sec:g=2}, we prove the needed base cases, i.e. the vanishing of $H^{11}(\Mb_{2,n})$ for all $n$, by explicit calculations using the generators and relations for $H^{11}(\Mb_{1,n})$. It should also be possible to deduce these base cases from results of Petersen \cite{Petersen,Petersenlocalsystems}; see Remark~\ref{alternate2}.

\subsection{Structure of the paper}
In Section \ref{genus1}, we recall how to extract $H^{11}(\Mb_{1,n})$ with its Hodge structure or Galois representation from the work of Getzler \cite{Getzler}. We then describe generators and relations for this group and describe the $\ss_n$-action and the pullback under tautological morphisms in terms of these generators. In Section \ref{original}, we recall the inductive method of Arbarello and Cornalba and use it to prove Theorem \ref{even}. In Section \ref{improved}, we present the inductive argument for vanishing of $H^{11}(\Mb_{g,n})$ for $g > 2$, using the weight spectral sequence, assuming the vanishing for $g = 2$. We then prove the vanishing in the necessary base cases, for $g = 2$, using the explicit generators and relations for $g = 1$. In Sections \ref{pointcounts}, \ref{higherodd}, and \ref{Chow}, we prove Corollary \ref{almost}, Theorem \ref{higher}, and Corollary \ref{nontautological}, respectively.

\subsection{Notations and conventions}
We denote by $H^{k}(\Mb_{g,n})$ either the $\qq$-Hodge structure $H^{k}((\Mb_{g,n})_{\cc},\qq)$ or the absolute Galois representation $H^{k}_{\text{\'et}}((\Mb_{g,n})_{\overline{\qq}},\qq_{\ell})$. We write $\mathsf{L}$ for the Tate motive.
We say that $H^k(\Mb_{g,n})$ is \emph{pure Hodge--Tate} if it is isomorphic to a direct sum of the Betti or $\ell$-adic realizations of powers of $\mathsf{L}$. We write $\mathsf{S}_{12}$ for the motive associated to the weight $12$ modular form $\Delta$, whose Betti and $\ell$-adic realizations are  $H^{11}(\Mb_{1,11})$. We denote by $A^*(X)$ the Chow ring with rational coefficients of a variety or Deligne--Mumford stack $X$.

\subsection*{Acknowledgments}
We are grateful to Jonas Bergstr\"om, Carel Faber, Dan Petersen, and Burt Totaro for helpful conversations. We especially thank Burt Totaro for suggesting an improvement of an earlier verison of Theorem \ref{cycleclass}, and thus Corollary \ref{nontautological}, and Dan Petersen for suggesting the proof sketch in Remark \ref{alternate1}. We thank the referee for helpful comments on an earlier version of this article.

\section{Genus $1$}\label{genus1}
In this section, we give explicit generators and relations for $H^{11}(\Mb_{1,n})$, and describe the $\ss_n$-action and the pullback to boundary divisors in terms of these generators.  These formulas will be used in  Section \ref{in2}, in our proof that $H^{11}(\Mb_{2,n}) = 0$.

\subsection{Dimension and Hodge structure} 
We start by explaining how to extract $H^{11}(\Mb_{1,n})$ with its Hodge structure or Galois representation from \cite{Getzler}.
Getzler gives generating functions
that determine
the cohomology groups $H^{*}(\Mb_{1,n})$ with their $\ss_n$-actions. 
These formulas simplify substantially when forgetting the $\ss_n$-action, so we begin by using Getzler's formula to extract $H^{11}(\Mb_{1,n})$ non-equivariantly.   
Below, as in \cite{Getzler}, we write $\mathsf{L}$ for the Tate motive, and $\mathsf{S}_{2k+2}$ for the Hodge structure associated to the space of cusp forms of weight $2k+2$ (see \cite[p. 489]{Getzler} for definition). We note that $\mathsf{S}_{2k+2} = 0$ for $k \leq 4$.

\begin{lem} \label{firstid}
The cohomology group  $H^{11}(\Mb_{1,n})$ is a direct sum of ${n-1 \choose 10}$ copies of $\mathsf{S}_{12}$.
\end{lem}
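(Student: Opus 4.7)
The plan is to read off the lemma directly from Getzler's computation of the Serre characteristic of $\Mb_{1,n}$ in \cite{Getzler}, which is expressed as a $\mathbb{Z}$-linear combination of Tate motives $\mathsf{L}^j$ and cusp-form motives $\mathsf{S}_{2k+2}$. Since I want only the non-equivariant statement, I would specialize Getzler's $\ss_n$-equivariant formula by replacing each irreducible $\ss_n$-representation by its dimension, reducing the problem to a coefficient extraction in a single generating function.

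The first step is a weight argument that isolates $\mathsf{S}_{12}$. Because $\Mb_{1,n}$ is smooth and proper, $H^{11}(\Mb_{1,n})$ is pure of weight $11$. The Tate twists $\mathsf{L}^j$ all have even weight $2j$ and therefore contribute nothing in weight $11$. Among the cusp-form motives, $\mathsf{S}_{2k+2}$ vanishes for $k \leq 4$ and otherwise has weight $2k+1$, so the only such motive of weight $11$ is $\mathsf{S}_{12}$ (with $k = 5$). Consequently $H^{11}(\Mb_{1,n})$ is forced to be a direct sum of copies of $\mathsf{S}_{12}$, and only the multiplicity remains to be determined.

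The remaining step is to extract the coefficient of $\mathsf{S}_{12}$ in Getzler's formula and verify that it equals $\binom{n-1}{10}$. The motive $\mathsf{S}_{12}$ enters the picture through the Eichler--Shimura contribution $H^1(\M_{1,1}, \Sym^{10} V)$ associated to the standard rank-$2$ local system $V$ on $\M_{1,1}$, and for this reason first appears in $H^*(\Mb_{1,11})$. For $n \geq 11$, the $\mathsf{S}_{12}$-contributions propagate through Getzler's formula via the modular-operad structure of genus-$1$ stable curves, and I expect a direct accounting of the relevant terms to give multiplicity $\binom{n-1}{10}$.

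The main obstacle is bookkeeping rather than substance: Getzler's generating function is intricate, and one must carefully isolate the $\mathsf{S}_{12}$-contribution among many Tate and cusp-form terms. The mitigating fact is that $\mathsf{S}_{12}$ can only be supported on strata whose smooth locus involves some $\M_{1,m}$ with $m \geq 11$, which substantially restricts the contributing boundary combinatorics and makes the extraction a finite, routine computation.
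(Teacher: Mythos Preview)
Your overall strategy matches the paper's: specialize Getzler's $\ss_n$-equivariant Serre characteristic to the non-equivariant one and read off the coefficient of $\mathsf{S}_{12}$. Your weight argument in the first step is correct and in fact makes explicit something the paper leaves implicit (namely, why only $\mathsf{S}_{12}$ among all $\mathsf{L}^j$ and $\mathsf{S}_{2k+2}$ can contribute to $H^{11}$).

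The gap is that you never actually perform the coefficient extraction. Saying ``I expect a direct accounting of the relevant terms to give multiplicity $\binom{n-1}{10}$'' is not a proof, and this is precisely the nontrivial part. The paper carries it out concretely: it applies Getzler's $\mathrm{rk}$ functor (setting $p_1 = x$, $p_n = 0$ for $n>1$) to turn the equivariant generating function into an ordinary one, uses Getzler's Theorem~2.5 to write $b_1 = a_1(x + b_0') + (\text{terms from }a_0)$, observes that the $a_0$-terms cannot carry $\mathsf{S}_{12}$, and then plugs into Getzler's residue formula for $a_1$. Tracking where $\mathsf{S}_{12}$ sits in that residue (it is multiplied by $\omega^{10}/\mathsf{L}^{11}$) reduces the multiplicity to the coefficient of $y^{10}$ in $e^{x(1+y)}/(1+y) = \sum_{n\geq 1} \frac{x^n}{n!}(1+y)^{n-1}$, which is $\binom{n-1}{10}$. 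None of this is in your proposal.

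Your final paragraph, suggesting you would instead restrict attention to boundary strata whose interior involves some $\M_{1,m}$ with $m \geq 11$, points toward a genuinely different argument (closer to the Leray/stratification approach sketched in the paper's Remark~\ref{alternate1}) rather than a direct manipulation of Getzler's generating function. If you intend that route, you would need to set it up and execute it; as written it is only a heuristic for why the extraction should be manageable, not a substitute for doing it.
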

\begin{proof}
Let $\mathsf{e}^{\ss_n}(\M)$ denote the $\ss_n$-equivariant Euler characteristic of $\M$ in the Grothendieck ring of equivariant mixed Hodge structures.
Getzler defines two families of generating functions $\mathbf{a}_i = \sum \mathsf{e}^{\ss_n}(\M_{i,n})$ and $\mathbf{b}_i = \sum \mathsf{e}^{\ss_n}(\Mb_{i,n})$. For $i = 0$ or $1$ these generating functions are power series in the Hodge structures
$\mathsf{L}$ and $\mathsf{S}_{2k+2}$ whose coefficients are symmetric functions.

To get to the ordinary Euler characteristic generating function  $\mathsf{e}(\M)$ from $\mathsf{e}^{\ss_n}(\M)$, we apply
Getzler's $\mathrm{rk}$ functor, which is defined by setting the power sums to $p_1 = x$ and $p_n = 0$ for $n > 0$.  It sends $\mathsf{e}^{\ss_n}(\Mb_{i,n})$ to $\mathsf{e}(\Mb_{i,n}) \frac{x^n}{n!}$ \cite[p. 484]{Getzler}. We shall write $a_i = \mathrm{rk}(\mathbf{a}_i)$ and $b_i = \mathrm{rk}(\mathbf{b}_i)$, which are power series in $\mathsf{L}$ and $\mathsf{S}_{2k+2}$ when $i = 0$ or $1$.
For example,
\[b_0 = \mathrm{rk}(\mathbf{b}_0) = \sum_{n \geq 3} \mathsf{e}(\Mb_{0,n}) \frac{x^n}{n!}. \]
Important for us is that
\[1 + x + b_0' = e^x + \text{terms divisible by $\mathsf{L}$}.\]
For $b_1$, we are interested in the coefficient of $\frac{x^n}{n!}\mathsf{S}_{12}$, which is equal to the negative of the multiplicity of $\mathsf{S}_{12}$ in $H^{11}(\Mb_{1,n})$ by construction.
Applying $\mathrm{rk}$ to \cite[Theorem 2.5]{Getzler} relates $b_1$ to $a_1$. Note that the symbol $\circ$ in \cite[Theorem 2.5]{Getzler} denotes plethysm of symmetric functions; applying $\mathrm{rk}$ turns this plethysm into composition of functions, as can be seen from the properties characterizing plethysm in \cite[Section 5.2]{Getzler2}. In particular, we find
\begin{equation} \label{Gthm} b_1 = a_1(x + b_0') + \text{terms built from $a_0$}.
\end{equation}
The terms built from $a_0$ cannot contribute to the coefficient of $\mathsf{S}_{12}$. (We note that there is a small error in \cite[Theorem 2.5]{Getzler}, which is corrected in \cite[p. 306]{ConsaniFaber}, but it occurs in these terms built from $a_0$, and thus will not affect the outcome of our calculation.)

To expand the right-hand side of \eqref{Gthm}, apply $\mathrm{rk}$ to the equation for $\mathbf{a}_1$ in \cite[p. 489]{Getzler}, and then plug in $x + b_0'$ for $x$ (so substitute $p_1 = x+b_0'$ and $p_n = 0$ for $n > 1$):
\[a_1 (x + b_0') = \mathrm{res}_0\left[
\frac{(1 + x + b_0')^{1 - \omega -\mathsf{L}/\omega + \mathsf{L}}-1}{1 - \omega -\mathsf{L}/\omega + \mathsf{L}} \times \left(\sum_{k=1}^\infty \left(\frac{\mathsf{S}_{2k+2} + 1}{\mathsf{L}^{2k+1}}\right) \omega^{2k} - 1\right)(\omega - \mathsf{L}/\omega )d\omega \right]. \]
In the middle parenthesized term, $\mathsf{S}_{12}$ is multiplied by $\omega^{10}/\mathsf{L}^{11}$. Since we need to take the residue at $0$ with respect to $\omega$, the coefficient of $\mathsf{S}_{12}$
appears when the other terms combine to give $\mathsf{L}^{11}/\omega^{11}$.
To get $\mathsf{L}^{11}/\omega^{11}$ we must use the $-\mathsf{L}/\omega$ piece of the $(\omega - \mathsf{L}/\omega)$ term. Similarly, when we expand the first term, only the powers of $\mathsf{L}/\omega$ are relevant. From this, we see that the coefficient of $\mathsf{S}_{12}$ in the above display is the negative of the
coefficient of $y^{10}$ in
\[\frac{(1 + x + b_0')^{1 + y}}{1+y} = \frac{e^{x(1+y)}}{1 + y} + \langle \mathsf{L} \rangle = \sum_{n = 1}^\infty \frac{x^n}{n!}(y+1)^{n-1} + \langle \mathsf{L} \rangle. \]
In conclusion, $H^{11}(\Mb_{1,n})$ consists of ${n-1 \choose 10}$ copies of $\mathsf{S}_{12}$.
\end{proof}

Getzler's formulas also encode the $\ss_n$-action on $H^{11}(\Mb_{1,n})$. We recover this information in a different way, by describing generators on which the $\ss_n$-action is evident, as follows.

\subsection{Generators and their pullbacks} \label{ga}
To begin, in the case $n = 11$, Lemma \ref{firstid} tells us
\[H^{11}(\Mb_{1,11})\otimes \cc = H^{11,0}(\Mb_{1,11}) \oplus H^{0, 11}(\Mb_{1,11}).\] 
The weight $12$ cusp form of $\SL_2(\zz)$ gives rise to a distinguished generator $\omega \in H^{11,0}(\Mb_{1,11})$; see \cite[p. 14]{FPhandbook} for an explicit geometric construction.
It is evident from this construction (or from \cite{Getzler}) that $\ss_{11}$ acts by the sign representation.

We now describe a natural collection of forms in $H^{11,0}(\Mb_{1,n})$, which we will soon see are generators. These forms come from pulling back the distinguished generator of $H^{11,0}(\Mb_{1,11})$ under the various forgetful maps $\Mb_{1,n} \to \Mb_{1,11}$.
Precisely, given an ordered subset $A \subset \{1, \ldots, n\}$ with $|A| = 11$, write $f_A\colon \Mb_{1,n} \to \Mb_{1,A} \cong \Mb_{1,11}$ for the projection map and define $\omega_A := f_A^*\omega \in H^{11,0}(\Mb_{1,n})$.

The pullbacks of these forms to boundary divisors follow a simple rule. By the K\"unneth formula, the only boundary divisors of $\Mb_{1,n}$ with non-zero $H^{11}$ are those of the form 
\begin{equation} \label{defdb}
D_B = \Mb_{1,B \cup p} \times \Mb_{0, B^c \cup q},
\end{equation}
where $|B| \geq 10$ and $\iota_B\colon D_B \to \Mb_{1,n}$ is the map that glues $p$ to $q$. 
In this case, projection onto the first factor $\mathrm{pr}_1\colon D_B \to \Mb_{1,B \cup p}$ induces an isomorphism 
\begin{equation}\label{firstfactor}
\mathrm{pr}_1^*\colon H^{11}(\Mb_{1,B \cup p}) \xrightarrow{\sim} H^{11}(D_B).
\end{equation}
Given ordered subsets $A$ and $B$ of $\{1, \ldots, n\}$ with $|A| = 11$ and $|A \cap B| = 10$, there is a unique element $i \in A$ such that $i \notin B$. Let $\epsilon(A)$ denote the ordered set obtained from $A$ by replacing $i$ with $p$, so $\epsilon(A)$ is a subset of $B$.
If $|A \cap B| = 11$ so that $A$ is already contained in $B$, then we set $\epsilon(A) = A$.
\begin{lem} \label{pullbacks}
Given a boundary divisor $\iota_B\colon D_B \to \Mb_{1,n}$, and $A \subset \{1, \ldots,n\}$ with $|A| = 11$,
we have
\[\iota_B^*\omega_A = \begin{cases} \mathrm{pr}_1^*\omega_{\epsilon(A)} & \text{if $|A \cap B| \geq 10$} \\ 0 & \text{otherwise.} \end{cases} \]
\end{lem}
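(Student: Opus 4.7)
The plan is to analyze the composition $f_A \circ \iota_B \colon D_B \to \Mb_{1,A}$ geometrically in each case and invoke the observation that $\omega \in H^{11,0}(\Mb_{1,11})$ is a holomorphic $11$-form, so its pullback to any smooth Deligne--Mumford stack of complex dimension $<11$ vanishes. A point of $D_B$ is a nodal curve whose genus $1$ component carries the marks indexed by $B \cup \{p\}$ and whose genus $0$ bubble carries the marks indexed by $B^c \cup \{q\}$, meeting at the node $p=q$. The map $f_A$ forgets all marks whose indices are not in $A$ and then contracts components that become unstable. The genus $1$ component always retains the node and so stays stable, while the genus $0$ bubble is unstable precisely when $|A \cap B^c| \le 1$, i.e.\ when $|A \cap B| \ge 10$.

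When $|A \cap B| \le 9$, neither component is contracted, and $f_A \circ \iota_B$ factors as a product of forgetful maps followed by a boundary gluing
\[
D_B \;\longrightarrow\; \Mb_{1,(A \cap B) \cup \{p'\}} \times \Mb_{0,(A \cap B^c) \cup \{q'\}} \xrightarrow{\;\iota_{A,B}\;} \Mb_{1,A}.
\]
The middle space has complex dimension $10$, so $\iota_{A,B}^*\omega = 0$ and hence $\iota_B^*\omega_A = 0$.

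When $|A \cap B| \ge 10$, contracting the unstable bubble expresses $f_A \circ \iota_B$ as the first projection $\mathrm{pr}_1$ followed by a morphism $\phi \colon \Mb_{1,B\cup p} \to \Mb_{1,A}$. If $A \subset B$, the bubble carries only the node after forgetting, so $\phi$ is just the forgetful map dropping $(B \setminus A) \cup \{p\}$, and $\phi^*\omega = \omega_A = \omega_{\epsilon(A)}$ by the very definition of $\omega_?$. If $|A \cap B|=10$ with $\{i\} = A \setminus B$, the bubble retains the single mark $i$, whose contraction transports $i$ to the former node, so $\phi$ is the forgetful map $\Mb_{1,B\cup p} \to \Mb_{1,\epsilon(A)}$ dropping $B \setminus A$, followed by the relabeling isomorphism $\Mb_{1,\epsilon(A)} \cong \Mb_{1,A}$ sending $p \mapsto i$. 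In either subcase $\iota_B^*\omega_A = \mathrm{pr}_1^*\omega_{\epsilon(A)}$.

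The main point requiring care will be the sign bookkeeping in the $|A \cap B| = 10$ subcase: since $\omega$ transforms by the sign character of $\ss_{11}$, the relabeling $p \mapsto i$ could in principle introduce a sign. One verifies, using the convention that $\epsilon(A)$ inherits the ordering of $A$ by substituting $p$ for $i$ in the same position, that the relabeling is order-preserving on labels and hence carries $\omega$ to $\omega$ with no sign change. The geometric factorizations themselves are routine consequences of the stabilization procedure for the forgetful morphism, so this ordered-subset bookkeeping is the only subtle step.
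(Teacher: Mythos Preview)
Your proposal is correct and follows essentially the same route as the paper: in both cases one analyzes how the composite $f_A \circ \iota_B$ factors, observing that for $|A\cap B|\le 9$ it lands in a boundary divisor of $\Mb_{1,A}$ (equivalently, factors through a $10$-dimensional product, forcing the holomorphic $11$-form to vanish), while for $|A\cap B|\ge 10$ the genus $0$ bubble contracts and the composite becomes $\mathrm{pr}_1$ followed by a forgetful/relabeling map. Your explicit treatment of the ordering convention for $\epsilon(A)$ and the resulting absence of a sign is a useful addition that the paper leaves implicit.
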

\begin{proof}
First suppose $|A \cap B| \leq 9$, so $|B^c \cap A| \geq 2$. Then there is a commutative diagram
\begin{equation} \label{d1}
\begin{tikzcd}
D_B \arrow{r}{\iota_B} \arrow{d} & \Mb_{1,n} \arrow{d}{f_A} \\
\Mb_{1,(A \cap B) \cup p} \times \Mb_{0,(A \cap B^c) \cup q} \arrow{r} & \Mb_{1,A},
\end{tikzcd}
\end{equation}
where the horizontal maps glue $p$ to $q$ and the
vertical maps forget markings not in $A$.
In this case, the image of $f_A\circ \iota_B$ is a proper boundary divisor in $\Mb_{1,A}$, which has no holomorphic $11$-forms. Hence, the pullback of the generator of $H^{11,0}(\Mb_{1,A})$ to $D_B$ vanishes.

Now suppose $|A \cap B| \geq 10$, so $|B^c \cap A| \leq 1$. Then, the lower left-hand side of \eqref{d1} must be replaced by $\Mb_{1, p \cup (B \cap A)}$. Thus, there is another commutative diagram
\begin{equation}
\begin{tikzcd}
D_B \arrow{r}{\iota_B} \arrow{d}[swap]{\mathrm{pr}_1} & \Mb_{1,n} \arrow{d}{f_A} \\
\Mb_{1,B \cup p} \arrow{r}[swap]{f_A} & \Mb_{1,A},
\end{tikzcd}
\end{equation}
where if $|A \cap B| = 10$, 
we identify $p$ with the unique symbol of $A$ not contained in $B$.
\end{proof}

\subsection{Relations and the $\ss_n$-action} \label{sn}
The group $\ss_n$ acts on the subsets $A \subset \{1, \ldots, n\}$ and correspondingly on the subspace of $H^{11,0}(\Mb_{1,n})$ generated by the $\omega_A$.
Note that, for any permutation $\sigma$ in the subgroup of $\ss_n$ fixing $A$, we have $\omega_{\sigma(A)} = \sign(\sigma) \omega_{A}$.

To identify our representation, we briefly recall some of the combinatorial objects that arise in the representation theory of $\ss_n$.

\medskip

A \emph{tabloid} is an equivalence class of tableaux, which identifies tableaux up to reordering rows. Given a tableau $T$, we write $\{T\}$ for the corresponding tabloid. Given a partition $\lambda$ of $n$, we denote by $M_{\lambda}$ the vector space with basis given by tabloids of shape $\lambda$.
    The \emph{Specht module generator} associated to a tableau $T$ is the vector
\begin{equation} \label{spmg} \sum_{\sigma \in \mathrm{C}_T}  \sign(\sigma) \{\sigma(T)\} \in M_{\lambda},
\end{equation}
where $\mathrm{C}_T \subset \ss_n$ is the subgroup that preserves the columns of $T$ setwise.
 The subspace of $M_{\lambda}$ generated by the vectors \eqref{spmg} as $T$ runs over all tableaux is an irreducible representation called the \emph{Specht module} $V_\lambda$. The Specht module generators associated to the standard tableaux on $\lambda$ form a basis for $V_{\lambda}$.

\medskip

To each ordered subset $A \subset \{1, \ldots, n\}$ of size $11$, we associate a tableau $T_A$ of shape $(n-10,1^{10})$ which has the symbols of $A$ in order down the first column and the rest of the first row filled in increasing order.

\begin{prop} \label{therep}
There is an $\ss_n$-equivariant isomorphism $H^{11,0}(\Mb_{1,n}) \xrightarrow{\sim} V_{n-10,1^{10}}$ taking $\omega_A$ to the Specht module generator associated to $T_A$.
\end{prop}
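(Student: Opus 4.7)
The plan is to combine the dimension identity from Lemma~\ref{firstid}, which gives $\dim H^{11,0}(\Mb_{1,n}) = \binom{n-1}{10} = \dim V_{(n-10,1^{10})}$, with an $\ss_n$-equivariant construction using the exterior power $\wedge^{11}\qq^n$. The antisymmetry $\omega_{\sigma A} = \sign(\sigma)\omega_A$ for $\sigma \in \ss_{11}$ shows that the assignment $e_{a_1}\wedge\cdots\wedge e_{a_{11}} \mapsto \omega_{(a_1,\ldots,a_{11})}$ descends to an $\ss_n$-equivariant linear map $\Phi\colon \wedge^{11}\qq^n \to H^{11,0}(\Mb_{1,n})$. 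On the Specht side, the polytabloid assignment $e_A \mapsto v_{T_A}$ yields an $\ss_n$-equivariant surjection $\Psi\colon \wedge^{11}\qq^n \twoheadrightarrow V_{(n-10,1^{10})}$ with kernel $V_{(n-11,1^{11})}$, coming from the standard decomposition $\wedge^{11}\qq^n = V_{(n-10,1^{10})} \oplus V_{(n-11,1^{11})}$ (where for $n = 11$ only the first summand appears). Matching $\ker\Phi$ with $\ker\Psi$ is what will produce the desired correspondence $\omega_A \leftrightarrow v_{T_A}$.

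The main task is then to show that $\ker\Phi = V_{(n-11,1^{11})}$. I plan to first establish surjectivity of $\Phi$; Lemma~\ref{firstid} then forces $\dim\ker\Phi = \binom{n-1}{11}$, and by the decomposition of $\wedge^{11}\qq^n$ the only $\ss_n$-subrepresentations of that dimension are $V_{(n-11,1^{11})}$ and, in the single case $n = 22$ where $\binom{n-1}{10} = \binom{n-1}{11}$, also $V_{(n-10,1^{10})}$. The latter possibility can be ruled out by pulling back to a boundary divisor $D_B$ with $|B| = 10$: Lemma~\ref{pullbacks} shows that $\iota_B^* \omega_A \neq 0$ precisely when $B \subseteq A$, and a straightforward incidence computation then produces enough linear independence among the $\omega_A$ to make $\ker\Phi = V_{(n-10,1^{10})}$ impossible.

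The main obstacle is surjectivity of $\Phi$ itself, i.e., that the $\omega_A$ actually span $H^{11,0}(\Mb_{1,n})$. My plan is to extract this from Getzler's equivariant generating function---the same machinery that underlies Lemma~\ref{firstid}. The argument there shows that each copy of $\mathsf{S}_{12}$ appearing in $H^{11}(\Mb_{1,n})$ arises as a pullback along a forgetful morphism $\Mb_{1,n} \to \Mb_{1,11}$; restricting to Hodge type $(11,0)$ identifies these summands with the pullbacks $\omega_A = f_A^*\omega$. Once surjectivity is in hand, $\Phi$ and $\Psi$ induce inverse $\ss_n$-equivariant isomorphisms between $V_{(n-10,1^{10})}$ and $H^{11,0}(\Mb_{1,n})$ matching $v_{T_A}$ with $\omega_A$, and $\ss_n$-equivariance is built into the construction, completing the proof.
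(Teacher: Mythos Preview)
Your framework via $\wedge^{11}\qq^n$ is correct and close to the paper's, but there is a real gap: your argument for the surjectivity of $\Phi$ does not hold up. Lemma~\ref{firstid} is a formal manipulation of Getzler's generating functions that extracts only the \emph{multiplicity} of $\mathsf{S}_{12}$; nothing in that computation identifies the copies of $\mathsf{S}_{12}$ with pullbacks along forgetful maps. So the sentence ``each copy of $\mathsf{S}_{12}$ appearing in $H^{11}(\Mb_{1,n})$ arises as a pullback along a forgetful morphism $\Mb_{1,n}\to\Mb_{1,11}$'' is exactly what needs to be proved, not something you can cite. Without surjectivity, your kernel computation never gets started.

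The fix is already latent in your own proposal. The boundary-pullback step you invoke only to disambiguate the case $n=22$ is in fact the whole engine of the paper's proof, and it works uniformly for all $n$. Using Lemma~\ref{pullbacks}, one computes the image of each $\omega_A$ under
\[
H^{11,0}(\Mb_{1,n}) \longrightarrow \bigoplus_{|B|=10} H^{11,0}(D_B)\;\cong\; M_{(n-10,1^{10})}
\]
and finds it is exactly the polytabloid attached to $T_A$. Thus the span of the $\omega_A$ surjects onto $V_{(n-10,1^{10})}\subset M_{(n-10,1^{10})}$, hence has dimension at least $\binom{n-1}{10}$; combined with Lemma~\ref{firstid} this gives $\langle\omega_A\rangle = H^{11,0}(\Mb_{1,n})$ and the desired isomorphism simultaneously. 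In your language, this single computation shows both that $\Phi$ is surjective and that $\ker\Phi=\ker\Psi$, so the detour through the casework on $\dim\ker\Phi$ becomes unnecessary.
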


\begin{proof}
The dimension of the Specht module $V_{n-10,1^{10}}$ is the number of standard tableaux of shape $(n-10,1^{10})$, which is ${n-1 \choose 10}$.
Therefore, by Lemma \ref{firstid}, it will suffice to give a $\ss_n$-equivariant map from the subspace of $H^{11,0}(\Mb_{1,n})$ generated by $\{\omega_A\}$  to $V_{n-10,1^{10}}$ that takes $\omega_A$ to the Specht module generator associated to $A$.

We are going to study the image of subspace generated by $\{\omega_A\}$ under the pullback map
\begin{equation} \label{quo} H^{11,0}(\Mb_{1,n}) \to \bigoplus_{\substack{|B| = 10}} H^{11,0}(D_{B}),
\end{equation}
where $B$ is an ordered subset of $\{1, \ldots, n\}$ and $D_B$ is as in \eqref{defdb}.
Let $W_B$ be the element of the right hand side of \eqref{quo} which has component $\mathrm{pr}_1^*\omega \in H^{11,0}(D_B)$ and $0$ in all other components.
The collection of ordered subsets $B$ of size $10$ is in bijection with tabloids on $(n-10,1^{10})$ where $B^c$ fills the row and $B$ (in order) fills the column. Thus, the right-hand side is identified with the vector space of tabloids $M_{n-10,1^{10}}$;
given a tabloid $\{T\}$ corresponding to $B$ we write $W_{\{T\}} = W_B$.

Fix $A \subset \{1, \ldots, n\}$ with $|A| = 11$.
If $B \subset A$, with $|B| = 10$, then $A$ is a permutation of $i \cup B$ for some $i \notin B$. 
Let
$\sigma_{A \to B}\colon i \cup B \to A$ denote the corresponding permutation.
By Lemma \ref{pullbacks}, the image of $\omega_A$ in the right hand side of \eqref{quo} is
\[\omega_A \mapsto \sum_{B \subset A} \sign(\sigma_{B \to A}) W_B = \sum_{\sigma \in S_A} \sign(\sigma) W_{\{\sigma(T_A)\}},\]
which is the Specht module generator defined in \eqref{spmg}, and the proposition follows.
\end{proof}

\begin{cor}
The forms $\{ \omega_A : 1 \in A, \text{ $A$ increasing} \}$ form a basis for $H^{11}(\Mb_{1,n})$.
\end{cor}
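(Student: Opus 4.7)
The plan is to deduce the corollary directly from Proposition~\ref{therep} together with the classical basis theorem for Specht modules. That proposition provides an $\ss_n$-equivariant isomorphism $H^{11,0}(\Mb_{1,n}) \xrightarrow{\sim} V_{n-10,1^{10}}$ sending $\omega_A$ to the Specht module generator associated to $T_A$. So the claim (at the level of $H^{11,0}$, whence also of $H^{11}$ after adjoining complex conjugates) reduces to showing that the Specht module generators indexed by $\{T_A : 1 \in A,\ A \text{ increasing}\}$ form a basis of $V_{n-10,1^{10}}$. The standard basis theorem recalled in Section~\ref{sn} says exactly this, provided we verify that these $T_A$'s are precisely the standard Young tableaux of shape $(n-10,1^{10})$.

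The combinatorial verification is short. Since $(n-10,1^{10})$ is a hook, a filling of this shape is a standard tableau if and only if the first row and the first column are each listed in increasing order. By the construction of $T_A$, the first column reads off $A$ in its chosen order, while the remainder of the first row records $\{1,\ldots,n\}\setminus A$ in increasing order. Hence $T_A$ is standard exactly when $A$ itself is listed in increasing order and the corner entry $\min A$ equals $1$; equivalently, $1 \in A$ and $A$ is increasing. The number of such subsets is $\binom{n-1}{10}$, which matches $\dim V_{n-10,1^{10}} = \dim_{\mathbb C} H^{11,0}(\Mb_{1,n})$ by Lemma~\ref{firstid}, so the count comes out correctly. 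The only obstacle is this mild combinatorial identification of standard hook tableaux; once that is in place, everything else is immediate from Proposition~\ref{therep} and the fact that Specht module generators of standard tableaux span the Specht module.
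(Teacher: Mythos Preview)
Your argument is correct and is precisely the intended one: the paper states this corollary without proof because it follows immediately from Proposition~\ref{therep} together with the fact (recalled just before that proposition) that the Specht module generators associated to standard tableaux form a basis for $V_\lambda$. Your verification that $T_A$ is standard exactly when $1\in A$ and $A$ is increasing is the only content needed, and it is right. One small remark: the statement as written says $H^{11}(\Mb_{1,n})$, but the argument (both yours and the paper's implicit one) really yields a basis of $H^{11,0}(\Mb_{1,n})$; this is a harmless imprecision in the corollary's phrasing rather than a gap in your reasoning.
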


\begin{cor} \label{cor:inj}
The pullback map $H^{11,0}(\Mb_{1,n}) \to \bigoplus_{\substack{|B| = 10}} H^{11,0}(D_{B})$ is injective.
\end{cor}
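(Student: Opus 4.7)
The plan is to observe that Corollary~\ref{cor:inj} is essentially an immediate consequence of the proof of Proposition~\ref{therep}, not a new argument. That proof exhibits the pullback map in question as the first arrow in a composition
\[
H^{11,0}(\Mb_{1,n}) \longrightarrow \bigoplus_{|B|=10} H^{11,0}(D_B) \stackrel{\sim}{\longrightarrow} M_{n-10,1^{10}},
\]
where the second arrow is the identification $W_B \leftrightarrow \{T_B\}$ of the direct sum (indexed by ordered subsets of size $10$) with the tabloid space of shape $(n-10, 1^{10})$. It was computed there that the composition carries each generator $\omega_A$ to the Specht module generator associated with the tableau $T_A$.

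From this computation the injectivity follows by a dimension count. Since the Specht module generators associated to standard tableaux of shape $(n-10, 1^{10})$ form a basis of $V_{n-10,1^{10}}$, the image of the composition is exactly $V_{n-10,1^{10}} \subset M_{n-10,1^{10}}$. On the other hand, Lemma~\ref{firstid} gives $\dim H^{11,0}(\Mb_{1,n}) = \binom{n-1}{10} = \dim V_{n-10,1^{10}}$. Hence the composition is an isomorphism onto $V_{n-10,1^{10}}$; in particular it is injective, and because the second arrow is itself an isomorphism, the first arrow---the boundary pullback---must be injective as well.

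There is no real obstacle to overcome: the Specht module combinatorics packaged inside Proposition~\ref{therep} already does all of the work, and the corollary merely extracts this geometric consequence. It is worth isolating as a separate statement only because this is the form in which the injectivity is invoked later (in Section~\ref{in2}, in the genus~$2$ vanishing arguments).
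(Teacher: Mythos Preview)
Your proposal is correct and matches the paper's approach: the corollary is stated without proof in the paper precisely because it is immediate from the argument establishing Proposition~\ref{therep}, which you have accurately unpacked. The dimension count you give is exactly the mechanism by which injectivity follows.
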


\begin{rem}\label{alternate1}
Dan Petersen suggested an alternate method to obtain several of the results in this section, which avoids the manipulations with generating functions in Lemma \ref{firstid}. We sketch his argument here. Let $f^{1}:\E\rightarrow \M_{1,1}$ denote the universal elliptic curve and $\E^{n}$ denote the $n$-fold fiber product of $\E$ with itself over $\M_{1,1}$. Note that $\M_{1,n}$ is an open substack of $\E^{n-1}$. By the long exact sequences for the pairs $(\Mb_{1,n},\M_{1,n})$ and $(\E^{n-1},\M_{1,n})$, we see that there are natural isomorphisms
\[
W_{11}H^{11}(\E^{n-1})\cong W_{11}H^{11}(\M_{1,n})\cong H^{11}(\Mb_{1,n}).
\]
One can then study the Leray spectral sequence for the smooth morphism $f^{n-1}:\E^{n-1}\rightarrow \M_{1,1}$. Let $\mathbb{V}$ denote the local system $R^1f^1_*\qq$. Then by the K\"unneth formula,
\[
Rf^{n-1}_*\qq\cong (Rf^{1}_*\qq)^{\otimes n-1}\cong (\qq \oplus \mathbb{V}[-1]\oplus \qq(-1)[-2])^{\otimes n-1}.
\]
The pure cohomology $W_{11}H^{11}(\E^{n-1})$ arises from the $\binom{n-1}{10}$ summands $\mathbb{V}[-1]^{\otimes 10}$, each of which gives a copy of $\mathsf{S}_{12}$. This gives Lemma \ref{firstid}. To identify the $\ss_n$ representation as in Proposition \ref{therep}, one first notes that as an $\ss_{n-1}$ representation, we have
\[
W_{11}H^{11}(\E_{n-1})\cong \mathsf{S}_{12}\otimes \mathrm{Ind}_{\ss_{10}\times \ss_{n-11}}^{\ss_{n-1}}(\mathrm{sgn}\boxtimes \mathrm{triv}). 
\]
By the Pieri formula and the branching rule for the symmetric group, it follows that as an $\ss_n$ reprsentation,
\[
W_{11}H^{11}(\E_{n-1})\cong \mathsf{S}_{12} \otimes V_{n-10,1^{10}}.
\]
\end{rem}

\section{Applying the Arbarello--Cornalba induction}\label{original}

We start by recalling the inductive method of Arbarello and Cornalba \cite{ArbarelloCornalba}, by excision of the boundary and pullback to its normalization. We then apply this method to prove Theorem \ref{even} and a preliminary proposition about the degree $11$ cohomology of $\Mb_{g,n}$.

\subsection{Restricting to boundary divisors}
Consider the excision long exact sequence associated to the boundary $\partial \M_{g,n}=\Mb_{g,n}\smallsetminus \M_{g,n}$: 
\begin{equation} \label{eq:boundaryinj}
\cdots\rightarrow H^k_c(\M_{g,n})\rightarrow H^k(\Mb_{g,n})\rightarrow H^k(\partial \M_{g,n})\rightarrow H^{k+1}_c(\M_{g,n})\rightarrow \cdots.
\end{equation}
Note that this sequence is in fact a long exact sequence of mixed Hodge structures or $\ell$-adic Galois representations. In particular, when $H^k_c(\M_{g,n})=0$, there is an injective morphism 
\[
H^k(\Mb_{g,n})\hookrightarrow H^k(\partial \M_{g,n}).
\]
Let $\tilde{\partial \M}_{g,n}$ denote the normalization of $\partial \M_{g,n}$. Arbarello and Cornalba improve on the injectivity of \eqref{eq:boundaryinj}, as follows.
\begin{lem}[Lemma 2.6 of \cite{ArbarelloCornalba}]\label{injective}
Suppose $H^k_c(\M_{g,n})=0$. Then the pullback
\[
H^k(\Mb_{g,n})\rightarrow H^k(\tilde{\partial \M}_{g,n})
\]
is injective.
\end{lem}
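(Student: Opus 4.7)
The plan is to factor the desired pullback as
\[
H^k(\Mb_{g,n}) \hookrightarrow H^k(\partial\M_{g,n}) \xrightarrow{\nu^*} H^k(\tilde{\partial\M}_{g,n}),
\]
where $\nu\colon \tilde{\partial\M}_{g,n}\to \partial\M_{g,n}$ is the normalization. The first map is already injective by the excision sequence \eqref{eq:boundaryinj} together with the hypothesis $H^k_c(\M_{g,n})=0$, so what remains is to show that the image of $H^k(\Mb_{g,n})$ meets $\ker(\nu^*)$ trivially.

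The key ingredient is a weight comparison. Since $\Mb_{g,n}$ is smooth and proper, $H^k(\Mb_{g,n})$ is pure of weight $k$, and hence its image in $H^k(\partial\M_{g,n})$ lies in $W_k H^k(\partial\M_{g,n})$ as a pure weight-$k$ sub-object. On the other hand, the boundary is a normal crossings divisor in the smooth proper stack $\Mb_{g,n}$. Writing $D^{[p]}$ for the disjoint union of the codimension-$p$ boundary strata -- each a smooth proper DM stack, with $D^{[1]}=\tilde{\partial\M}_{g,n}$ -- there is an associated Mayer--Vietoris (weight) spectral sequence of mixed Hodge structures, respectively Galois representations,
\[
E_1^{p,q} = H^q(D^{[p+1]}) \Longrightarrow H^{p+q}(\partial\M_{g,n}),
\]
whose edge map $H^k(\partial\M_{g,n})\twoheadrightarrow E_\infty^{0,k}\hookrightarrow E_1^{0,k}=H^k(\tilde{\partial\M}_{g,n})$ is precisely $\nu^*$. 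Since each $D^{[p+1]}$ is smooth and proper, the term $E_1^{p,q}$ is pure of weight $q$.

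Consequently, $\ker(\nu^*)$ is filtered by the contributions of $E_\infty^{p,k-p}$ with $p\geq 1$, each pure of weight at most $k-p \leq k-1$; thus $\ker(\nu^*)\subseteq W_{k-1}H^k(\partial\M_{g,n})$. This cannot meet the pure weight-$k$ image of $H^k(\Mb_{g,n})$ non-trivially, and injectivity follows.

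The main technical point I expect to have to verify is setting up the weight spectral sequence in the DM-stack setting and identifying its edge map with $\nu^*$; in particular, one must handle the self-glued component $\delta_{\mathrm{irr}}$, whose normalization involves an $\ss_2$-quotient, for instance by passing to an \'etale cover on which the boundary becomes strictly simple normal crossings. Once this framework is in place, the argument reduces to the routine weight count above.
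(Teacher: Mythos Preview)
The paper does not supply its own proof of this lemma; it is quoted verbatim as Lemma~2.6 of \cite{ArbarelloCornalba} and used as a black box. So there is nothing in the present paper to compare against.

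That said, your argument is correct and is essentially the one Arbarello and Cornalba give. The factorization through $H^k(\partial\M_{g,n})$, the purity of the image coming from the smooth proper $\Mb_{g,n}$, and the Mayer--Vietoris/weight spectral sequence for the normal crossings boundary showing $\ker(\nu^*)\subset W_{k-1}$ is exactly the standard mechanism. Your caveat about the orbifold self-intersections (the $\delta_{\mathrm{irr}}$ component and the $\ss_2$-quotients) is the right one, and the fix you indicate---pass to an \'etale cover where the boundary becomes strict normal crossings, or equivalently work with the simplicial/descent resolution of the closed cover by the $\Mb_\Gamma$---is precisely what is done in \cite{ArbarelloCornalba}. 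There is no gap.
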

For fixed $k$, the following proposition gives vanishing of compactly supported cohomology in all but an explicit finite collection of cases.
\begin{prop}[Proposition 2.1 of \cite{BergstromFaberPayne}]\label{compactsupport}
Assume $g\geq 1$. 
\begin{equation*}
    H^k_c(\M_{g,n})=0 \text{ for } \begin{cases}
    & k<2g \text{ and } n=0,1 \\
    & k<2g-2+n \text{ and } n\geq 2.
    \end{cases}
\end{equation*}
\end{prop}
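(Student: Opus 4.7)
The plan is to translate the vanishing of compactly supported rational cohomology into a top-degree vanishing of ordinary rational cohomology via Poincar\'e--Lefschetz duality on the smooth Deligne--Mumford stack $\M_{g,n}$, and then to invoke Harer's theorem on the virtual cohomological dimension of the mapping class group, supplemented by known top-degree vanishing results where necessary. Since $\M_{g,n}$ has dimension $d = 3g - 3 + n$, orbifold Poincar\'e duality gives $H^k_c(\M_{g,n}, \qq) \cong H^{2d-k}(\M_{g,n}, \qq)^\vee(-d)$, so the proposition is equivalent to the following statements in ordinary cohomology: $H^j(\M_g, \qq) = 0$ for $j \geq 4g - 5$ when $n = 0$; $H^j(\M_{g,1}, \qq) = 0$ for $j \geq 4g - 3$ when $n = 1$; and $H^j(\M_{g,n}, \qq) = 0$ for $j \geq 4g - 3 + n$ when $n \geq 2$.

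For $n \geq 2$, this is immediate from Harer's computation that the virtual cohomological dimension of the mapping class group $\Gamma_{g,n}$ equals $4g - 4 + n$ for $n \geq 1$ (and equals $4g - 5$ for $g \geq 2$, $n = 0$): rational cohomology of $\M_{g,n}$ agrees with the group cohomology of $\Gamma_{g,n}$, and this vanishes above vcd. For $n = 0, 1$, the required bound is exactly one degree beyond vcd, so an additional input is needed. The case $n = 0$ amounts to the statement that $H^{4g-5}(\M_g, \qq) = 0$ for $g \geq 2$, which is due to Church--Farb--Putman (extending earlier work of Morita--Sakasai--Suzuki and Harer). I expect this to be the main obstacle of the proof, since it is a genuine refinement of Harer's vcd bound rather than a formal consequence.

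The remaining case $n = 1$ then reduces to the $n = 0$ case via the Leray spectral sequence for the smooth proper forgetful map $\pi \colon \M_{g,1} \to \M_g$, whose fiber is a smooth genus $g$ curve; this spectral sequence degenerates at $E_2$ by Deligne's theorem. The contributions $H^p(\M_g, R^q\pi_* \qq)$ to $H^{4g-3}(\M_{g,1}, \qq)$ with $p + q = 4g - 3$ vanish case by case: for $q = 0, 1$ we have $p > \mathrm{vcd}(\M_g) = 4g - 5$, so these vanish by Harer's bound (which applies with arbitrary local-system coefficients), while for $q = 2$ we use the identification $R^2\pi_*\qq \cong \qq(-1)$ together with Church--Farb--Putman. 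Finally, the small-genus cases not covered by $g \geq 2$, notably $g = 1, n = 1$, can be checked directly using that $\M_{1,1}$ is rationally acyclic in positive degrees.
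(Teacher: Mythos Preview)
The paper does not prove this proposition; it merely quotes it from \cite{BergstromFaberPayne}. Your outline is correct and is the standard argument, and it is essentially the one given in the cited reference: Poincar\'e duality on the orbifold $\M_{g,n}$ converts the compactly supported vanishing into $H^j(\M_{g,n},\qq)=0$ above the thresholds you write, Harer's vcd computation handles all cases except the single top degree when $n\in\{0,1\}$, and the remaining degree $H^{4g-5}(\M_g,\qq)=0$ is the theorem of Church--Farb--Putman (with the $n=1$ case reduced to it via the universal-curve fibration). Two minor remarks: you do not actually need Deligne's degeneration, since you are showing every $E_2^{p,q}$ with $p+q=4g-3$ vanishes; and the Leray reduction for $n=1$ only makes sense for $g\ge 2$, so the small cases $g=1$, $n=0,1$ must be treated directly (as you note for $n=1$; for $n=0$ the pair $(1,0)$ is excluded by the stability condition $2g-2+n>0$).
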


\subsection{The case of even degrees $k \leq 12$}
Let $RH^*(\Mb_{g,n})\subset H^*(\Mb_{g,n})$ be the tautological cohomology ring. Tautological classes are algebraic and defined over $\mathbb{Z}$, so if $H^k(\Mb_{g,n}) = RH^k(\Mb_{g,n})$, then $H^k(\Mb_{g,n})$ is pure Hodge--Tate. The next lemma provides the necessary base cases for the inductive argument.
\begin{lem} \label{basecases}
If $2g - 2 + n \leq 12$ and $k \leq 12$ is even, then $H^k(\Mb_{g,n})$ is pure Hodge--Tate.
\end{lem}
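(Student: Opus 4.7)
The plan is to reduce the statement to the fact, observed just before the lemma, that the tautological subring $RH^k(\Mb_{g,n})$ consists of pure Hodge--Tate classes. In other words, it suffices to show that $H^k(\Mb_{g,n}) = RH^k(\Mb_{g,n})$ for every $(g,n)$ with $2g-2+n \leq 12$, in the range of even $k \leq 12$. I would handle this by splitting into cases based on the genus and invoking the known tautological results cited in the introduction.

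For $g = 0$, Keel's presentation of $H^*(\Mb_{0,n})$ shows that all cohomology is generated by boundary divisor classes, and hence is tautological. For $g = 1$, Getzler's computation of the $\ss_n$-equivariant Serre characteristic of $\Mb_{1,n}$ shows that non-tautological contributions to the cohomology come only from spaces of cusp forms and appear only in odd degrees (for instance $\mathsf{S}_{12}$ appears in $H^{11}$); in particular, the even cohomology in any degree is tautological. For $g = 2$, Petersen's description of $H^*(\Mb_{2,n})$ (at least in the range $2g - 2 + n \leq 12$, i.e. $n \leq 10$) again yields that all cohomology is tautological.

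For $g \geq 3$, I would observe that the numerical hypothesis $2g - 2 + n \leq 12$ is equivalent to $2g + n \leq 14$, which is exactly the range in which the first two authors' result \cite{CL-CKgP} asserts that $H^*(\Mb_{g,n})$ is tautological. Combining these four cases covers every $(g,n)$ allowed by the hypothesis. In each case, the conclusion that $H^k(\Mb_{g,n})$ is pure Hodge--Tate for even $k \leq 12$ is then immediate from the observation that the tautological ring is generated by algebraic classes defined over $\mathbb{Z}$. There is no genuine obstacle here: the lemma is essentially a bookkeeping exercise, and the only point requiring a bit of care is recording that for $g = 1$ we only need the even-degree cohomology to be tautological, which is unaffected by the cusp-form contributions in odd degree.
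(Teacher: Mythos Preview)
Your proposal is correct and matches the paper's proof almost exactly: both split by genus and cite Keel for $g=0$, Petersen for $g=2$, and \cite{CL-CKgP} for $g \geq 3$. The only difference is that for $g=1$ the paper invokes Petersen's result \cite{Petersengenus1} that the even cohomology is tautological, rather than extracting the Hodge--Tate property from Getzler's computations as you do.
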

\begin{proof}
For $g = 0$ and any $n$, all cohomology is tautological \cite{Keel}, as is all \emph{even} cohomology for $g = 1$ \cite{Petersengenus1}, and for $g = 2$ and $n < 20$ \cite{Petersen}. Finally, for $g \geq 3$, all cohomology is tautological for $2g - 2 +n \leq 12$ \cite[Theorem 1.4]{CL-CKgP}. 
\end{proof}

\begin{proof}[Proof of Theorem \ref{even}]
We induct on $g$ and $n$.
By Lemma \ref{basecases}, we can assume $2g-2+n>k$. 
By Lemma \ref{compactsupport}, we have an injection
\begin{equation} \label{inc}
H^{k}(\Mb_{g,n})\hookrightarrow H^{k}(\tilde{\partial \M}_{g,n}).
\end{equation}
Each component of $\tilde{\partial \M}_{g,n}$ is a quotient by a finite group of
$\Mb_{g-1,n+2}$ or $\Mb_{g_1,n_1+1}\times\Mb_{g_2,n_2+1}$, where $g_1+g_2=g$, $n_1 + n_2 = n$, and $2g_i-2+n_i>0$. By induction on $g$, we know $H^k(\Mb_{g-1,n+2})$ is pure Hodge--Tate.
Meanwhile, note that $H^j(\Mb_{g',n'})=0$ for all $(g',n')$ and odd $j \leq k/2 \leq 6$ by \cite{ArbarelloCornalba}. Hence, the K\"unneth formula shows that
\begin{equation}\label{second2}
H^{k}(\Mb_{g_1,n_1+1}\times\Mb_{g_2,n_2+1})=\bigoplus_{i=0}^{k/2}
H^{2i}(\Mb_{g_1,n_1+1})\otimes H^{k-2i}(\Mb_{g_2,n_2+1}).
\end{equation}
Inductively, we know that the right hand side of \eqref{second2} is pure Hodge--Tate. Thus, \eqref{inc} gives an injection of $H^k(\Mb_{g,n})$ into a Hodge structure or Galois representation that is pure Hodge--Tate, and it follows that  $H^k(\Mb_{g,n})$ is pure Hodge--Tate as well.
\end{proof}

\subsection{The case of degree $k=11$}
The base cases required to run an analogous induction for $k = 11$ are those where $H^{11}(\Mb_{g,n})$ does not inject into $H^{11}(\tilde{\partial \M}_{1,11})$. Recent results of the first two authors rule out any such bases cases with $g \geq 2$.

\begin{lem} \label{0to1}
If $g \geq 2$, then $H^{11}(\Mb_{g,n}) \to H^{11}(\tilde{\partial \M}_{g,n})$ is injective.
\end{lem}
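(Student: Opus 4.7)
The plan is to combine Lemma~\ref{injective} with Proposition~\ref{compactsupport}, and then handle the finite list of exceptional cases where the latter does not apply directly. By Lemma~\ref{injective}, the pullback $H^{11}(\Mb_{g,n}) \to H^{11}(\tilde{\partial \M}_{g,n})$ is injective whenever $H^{11}_c(\M_{g,n}) = 0$. Setting $k = 11$ in Proposition~\ref{compactsupport}, this vanishing holds except when either $n \in \{0,1\}$ and $g \leq 5$, or $n \geq 2$ and $2g + n \leq 13$. Intersecting with $g \geq 2$, the exceptional pairs are: $g=2$ with $n \leq 9$; $g=3$ with $n \leq 7$; $g=4$ with $n \leq 5$; and $g=5$ with $n \leq 3$.

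In each of these exceptional cases I would prove the stronger statement $H^{11}(\Mb_{g,n}) = 0$, which makes the required injectivity automatic. For every listed case with $g \geq 3$, one verifies directly that $2g - 2 + n \leq 12$, so by the recent result of the first two authors \cite[Theorem 1.4]{CL-CKgP} (as recorded in Lemma~\ref{basecases}), the full rational cohomology of $\Mb_{g,n}$ is tautological; since tautological classes live in even cohomological degree, $H^{11}(\Mb_{g,n})$ vanishes. This disposes of all the $g \geq 3$ exceptional pairs at once.

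For $g = 2$, the subcases $n \leq 2$ are ruled out by dimension, since $\dim_\cc \Mb_{2,n} = 3 + n \leq 5$ forces $H^{11}(\Mb_{2,n}) = 0$. The remaining subcases $g = 2$ with $3 \leq n \leq 9$ form the principal obstacle, as they lie outside the range of \cite{CL-CKgP}; I would dispose of them by invoking the cohomology computations of Petersen \cite{Petersen, Petersenlocalsystems}, which establish vanishing of odd cohomology for $\Mb_{2,n}$ in this range (compare Remark~\ref{alternate2}). Alternatively, the self-contained argument developed in Section~\ref{sec:g=2}, using the explicit generators and relations for $H^{11}(\Mb_{1,n})$ from Section~\ref{genus1}, also finishes these subcases and hence the lemma uniformly for all $g \geq 2$.
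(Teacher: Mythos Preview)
Your approach mirrors the paper's: split into the range where Proposition~\ref{compactsupport} applies and the finitely many remaining pairs, then kill the latter by showing $H^{11}(\Mb_{g,n})=0$ outright. The paper does this in two lines, via the dichotomy $2g-2+n>11$ versus $2g-2+n\le 11$, invoking \cite[Theorem~1.4]{CL-CKgP} uniformly for all $g\ge 2$ in the second case. Your version unnecessarily separates $g=2$ from $g\ge 3$ because you believe the cases $g=2$, $3\le n\le 9$ lie outside the range of \cite{CL-CKgP}. They do not: both the paper's proof of this lemma and the opening sentence of Section~\ref{sec:g=2} cite \cite{CL-CKgP} precisely to conclude that $H^*(\Mb_{2,n})$ is tautological for $n\le 9$. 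The introduction highlights $g\ge 3$ only because those were the new base cases needed for the inductions.

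The workarounds you offer for $g=2$, $3\le n\le 9$ both fail as written. The Petersen route is not a proof in this paper: Remark~\ref{alternate2} says only that it ``should also be possible'' and gives a sketch. More seriously, invoking ``the self-contained argument developed in Section~\ref{sec:g=2}, using the explicit generators and relations'' is circular. That generator--relation argument (showing $\beta$ is injective) is carried out only for $n\ge 12$, and it \emph{begins} by citing Lemma~\ref{0to1} to obtain injectivity of $\alpha$; you cannot use it to prove Lemma~\ref{0to1}. For $n\le 9$, Section~\ref{sec:g=2} does not use generators and relations at all---it simply cites \cite{CL-CKgP}, which is exactly what you should have done directly.
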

\begin{proof}
If $2g - 2 + n > 11$, this follows by combining Lemma \ref{injective} and Proposition \ref{compactsupport}.
If $2g - 2 + n \leq 11$ and $g \geq 2$, then \cite[Theorem 1.4]{CL-CKgP} shows $H^{11}(\Mb_{g,n}) = 0$.
\end{proof}

\begin{prop}\label{s12}
For any $g$, there is an injection \[H^{11}(\Mb_{g,n})\hookrightarrow \bigoplus H^{11}(\Mb_{1,11}).\]
\end{prop}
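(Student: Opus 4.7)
The plan is to prove Proposition \ref{s12} by double induction on $g$ and $n$, building on Lemma \ref{0to1} and the results already recorded in the excerpt.

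First, I would handle the small genus cases directly. For $g=0$, all cohomology of $\Mb_{0,n}$ is tautological and concentrated in even degree \cite{Keel}, so $H^{11}(\Mb_{0,n}) = 0$. For $g=1$, Lemma \ref{firstid} already gives an isomorphism $H^{11}(\Mb_{1,n}) \cong \bigoplus_{\binom{n-1}{10}} \mathsf{S}_{12}$, and since $\mathsf{S}_{12} = H^{11}(\Mb_{1,11})$ by our conventions, this is exactly the desired form (vacuously an injection for $n \leq 10$). So the substance is the case $g \geq 2$.

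For the inductive step with $g \geq 2$, Lemma \ref{0to1} gives an injection
\[
H^{11}(\Mb_{g,n}) \hookrightarrow H^{11}(\tilde{\partial \M}_{g,n}).
\]
Each connected component of $\tilde{\partial \M}_{g,n}$ is the quotient by a finite group of either $\Mb_{g-1,n+2}$ or a product $\Mb_{g_1,n_1+1} \times \Mb_{g_2,n_2+1}$, with $g_1 + g_2 = g$, $n_1 + n_2 = n$, and $2g_i - 2 + n_i > 0$. Since we work with rational coefficients, taking invariants under a finite group is exact, so it suffices to embed each of these summands into a direct sum of copies of $H^{11}(\Mb_{1,11})$. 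For the irreducible boundary $\Mb_{g-1,n+2}$ this is immediate by induction on $g$. For a reducible component, Künneth gives
\[
H^{11}(\Mb_{g_1,n_1+1} \times \Mb_{g_2,n_2+1}) = \bigoplus_{i+j=11} H^{i}(\Mb_{g_1,n_1+1}) \otimes H^{j}(\Mb_{g_2,n_2+1}).
\]
Since $i+j=11$ is odd, one of $i,j$ is odd; unless $\{i,j\} = \{0,11\}$, both are odd integers strictly less than $11$, hence at most $9$. By \cite{ArbarelloCornalba, BergstromFaberPayne}, odd cohomology of $\Mb_{g',n'}$ vanishes in all degrees $\leq 9$, so all such cross-terms are zero. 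The remaining two summands are $H^{0}(\Mb_{g_1,n_1+1}) \otimes H^{11}(\Mb_{g_2,n_2+1})$ and the symmetric one, each of which is a direct sum of copies of $H^{11}(\Mb_{1,11})$ by the inductive hypothesis (applied to $g_i < g$).

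Combining these observations, $H^{11}(\Mb_{g,n})$ injects into a direct sum of copies of $H^{11}(\Mb_{1,11})$, completing the induction. The main technical point is the verification that the Künneth cross-terms vanish; this is where the prior vanishing results for odd cohomology in degrees $\leq 9$ are essential, and without them the induction would break. The rest of the argument is a clean bookkeeping of boundary strata together with the base cases already recorded in Lemma \ref{0to1}.
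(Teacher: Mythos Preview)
Your approach is essentially identical to the paper's: restrict to the normalized boundary via Lemma~\ref{0to1}, use K\"unneth together with the vanishing of odd cohomology in degrees $\leq 9$ to kill the cross-terms, and then induct. Two points, however, need correcting.

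First, the K\"unneth analysis is misstated. If $i+j = 11$, then exactly one of $i,j$ is odd and the other is even; it is never the case that ``both are odd.'' The correct argument is: if $\{i,j\} \neq \{0,11\}$, the odd member lies in $\{1,3,5,7,9\}$, and then \cite{ArbarelloCornalba, BergstromFaberPayne} kills that factor. Your conclusion is right, but the sentence as written is false.

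Second, and more seriously, the induction does not close as written. You announce a double induction on $g$ and $n$, but when applying the inductive hypothesis to the product factors you write ``applied to $g_i < g$.'' This is not always true: when one vertex has genus $0$, the other has genus $g_1 = g$, and you cannot descend in genus. What saves you there is that $n_1 + 1 < n$ (since the genus $0$ factor needs $n_2 \geq 2$ for stability), so you must invoke the induction on $n$ at that point. The paper makes this explicit: ``either $g_1 < g$ or $g_1 = g$ and $n_1 + 1 < n$, and analogously for $(g_2,n_2)$.'' Without this observation the induction is not actually running on both parameters, and the argument is incomplete.
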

\begin{proof}
The result is known for $g \leq 1$, so we may assume $g \geq 2$.  By Lemma \ref{0to1}, we have an injection
\[
H^{11}(\Mb_{g,n})\hookrightarrow H^{11}(\tilde{\partial \M}_{g,n}).
\]
Each component of $\tilde{\partial \M}_{g,n}$ is a quotient by a finite group of
$\Mb_{g-1,n+2}$ or $\Mb_{g_1,n_1+1}\times\Mb_{g_2,n_2+1}$, where $k_1+k_2=n$, $2g_i-2+n_i>0$. 
Because $H^k(\Mb_{g',n'})=0$ for all $(g',n')$ and $k=1,3,5,7,9$ \cite{ArbarelloCornalba,BergstromFaberPayne}, the K\"unneth formula shows that
\begin{equation}\label{second}
H^{11}(\Mb_{g_1,n_1+1}\times\Mb_{g_2,n_2+1})=H^{11}(\Mb_{g_1,n_1+1})\oplus H^{11}(\Mb_{g_2,n_2+1}).
\end{equation}
Note that either $g_1<g$ or $g_1=g$ and $n_1+1<n$, and analogously for $(g_2,n_2)$. Therefore, there is an injective morphism of Hodge structures from $H^{11}(\Mb_{g,n})$ into a direct sum of Hodge structures of the form $H^{11}(\Mb_{\gamma,\nu})$ where $\gamma<g$ or $\gamma=g$ and $\nu<n$. By double induction on $g$ and $n$, and using that $H^{11}(\Mb_{0,n}) = 0$ and $H^{11}(\Mb_{1,n}) = 0$ for $n \leq 10$, we conclude that there is an injective morphism of Hodge structures or $\ell$-adic Galois representations
\[
H^{11}(\Mb_{g,n})\hookrightarrow \bigoplus H^{11}(\Mb_{1,11}). \qedhere
\]
\end{proof}

\section{An induction via the weight spectral sequence}\label{improved}
In this section, we prove the vanishing of $H^{11}(\Mb_{g,n})$ for $g \geq 2$. 
We do so by identifying the injection $H^{11}(\Mb_{g,n}) \hookrightarrow H^{11}(\tilde{\partial \M}_{g,n})$ (Lemma \ref{0to1}) as the first map in a complex and showing 
that the next map in the complex is injective.
The complex we use is obtained from the $E_1$-page in the weight spectral sequence associated to the compactification $\Mb_{g,n}$ of $\M_{g,n}$ \cite{DeligneHodgeII}, via Poincar\'e duality.  It is also the weight 11 summand of the Feynman transform of the modular operad $H^*(\Mb_{g,n})$ \cite{GetzlerKapranov}, and therefore has a natural graph complex interpretation \cite[Section~2.3]{PayneWillwacher21}.

\subsection{The first two maps in the weight $k$ complex}

Let $\Gamma$ denote a stable $n$-marked graph of genus $g$; the underlying graph is connected, each vertex $v$ is labeled by an integer $g_v$, and the valence of the vertex $v$ is denoted $n_v$. The stability condition is that $2g_v+n_v-2 > 0$. Set $\Mb_{\Gamma}=\prod_v \Mb_{g_v,n_v}$. There is the natural gluing map
\[
\xi_{\Gamma}\colon\Mb_{\Gamma}\rightarrow \Mb_{g,n}.
\]
The normalization of $\xi_{\Gamma}(\Mb_{\Gamma})$ is isomorphic to $\Mb_{\Gamma}/\Aut(\Gamma)$.
Thus, the first map in the weight $k$ complex, the pullback of $H^k$ to the normalization of the boundary, can be rewritten as:
\begin{equation}\label{firstmapgraphs}
H^{k}(\Mb_{g,n})\xrightarrow{ \ \alpha \ } \bigoplus_{|E(\Gamma)|=1} H^{k}(\Mb_{\Gamma})^{\Aut \Gamma}.
\end{equation}

\noindent The target of the $j$th map in the weight $k$ complex is $\bigoplus_{|E(\Gamma)|=j} (H^{k}(\Mb_{\Gamma}) \otimes \det E(\Gamma))^{\Aut \Gamma}.$ Here, $\det E(\Gamma)$ denotes the determinant of the permutation representation of $\Aut(\Gamma)$ acting on the set of edges.  We will only need the first and second maps. 

Let us describe the second map. We define
\begin{equation}\label{beta}
\bigoplus_{|E(\Gamma) = 1|} H^{k}(\Mb_{\Gamma})^{\Aut(\Gamma)} \xrightarrow{ \ \beta \ } \bigoplus_{|E(\Gamma) = 2|} (H^{k}(\Mb_{\Gamma}) \otimes \det E(\Gamma))^{\Aut(\Gamma)} 
\end{equation}
as follows.
For each graph $\Gamma$ with two edges, choose an ordering of 
the edges $E(\Gamma)= \{e_1, e_2\}$ and say $e_i$ corresponds to the node obtained by gluing the marked points $p_i$ and $q_i$. Let $\phi_i(\Gamma)$ denote the graph with one edge obtained by contracting $e_i$. Gluing $p_i$ and $q_i$ induces a map 
$\xi_i\colon \Mb_{\Gamma} \to \Mb_{\phi_i(\Gamma)}$, which in turn gives a map $\xi_i^*\colon H^{k}(\Mb_{\phi_i(\Gamma)}) \to H^{k}(\Mb_{\Gamma})$. This induces 
\begin{equation} \label{a1} 
H^{11}(\Mb_{\phi_i(\Gamma)}) \xrightarrow{\xi_i^*}
H^{11}(\Mb_{\Gamma}).
\end{equation}
 If $E(\Gamma)$ is a non-trivial representation of $\Aut(\Gamma)$, then $\xi_2 = \xi_1 \circ \sigma$ where $\sigma\colon \Mb_{\Gamma} \to \Mb_{\Gamma}$ is the automorphism that simultaneously swaps $p_1$ with $p_2$ and $q_1$ with $q_2$ (corresponding to the automorphism of $\Gamma$ that swaps the two edges). In particular, it follows that the image of $\xi_1^* - \xi_2^*$ lies in the subspace $(H^{k}(\Mb_{\Gamma}) \otimes \det E(\Gamma))^{\Aut(\Gamma)} \subset H^{k}(\Mb_{\Gamma})$.
Then $\beta$ is defined by taking the sum over all $2$-edge graphs. Then $\beta \circ \alpha = 0$, because each component is the difference of the pullbacks under two copies of the same gluing maps.

\subsection{Proof of Theorem \ref{H11}, assuming it holds for $g = 2$}

We first give a short inductive proof of Theorem \ref{H11}, assuming that $H^{11}(\Mb_{2,n}) = 0$ for all $n$. Fix $k = 11$ and $g \geq 3$.  By Lemma \ref{0to1}, the map $\alpha$ in \eqref{firstmapgraphs} is injective. We claim that the map 
$\beta$ in \eqref{beta} is also injective. The theorem follows from this claim, since $\beta \circ \alpha = 0$.

Consider the domain of $\beta$, which is $\bigoplus_{|E(\Gamma) = 1|} H^{k}(\Mb_{\Gamma})^{\Aut(\Gamma)}$.  If $\Gamma$ has a single vertex with a loop edge, then $H^{11}(\Mb_\Gamma)^{\Aut(\Gamma)} = H^{11}(\Mb_{g-1,n+2})^{\ss_2}$, which vanishes by induction on $g$.

Suppose $\Gamma$ has two vertices joined by an edge. Then $\Mb_\Gamma = \Mb_{a, A \cup p} \times \Mb_{b, A^c \cup q}$. By the K\"unneth formula and the vanishing of lower degree odd cohomology, 
\begin{equation} \label{eq:Kuenneth}
H^{11}(\Mb_{a, A \cup p} \times \Mb_{b,A^c \cup q})=H^{11}(\Mb_{a, A \cup p})\oplus H^{11}(\Mb_{b,A^c \cup q}).
\end{equation}
Assume $a \geq b$. By induction on $g$ and $n$, $H^{11}(\Mb_\Gamma)$ vanishes unless $b = 1$ and $|A^c| \geq 10$, in which case it is $H^{11}(\Mb_{1,A^c \cup q})$.  Note that, in this case, $\Aut(\Gamma)$ is trivial.

Let $\Gamma'$ be the graph obtained by attaching a loop to the vertex of genus $g-1$ (and decreasing its genus accordingly), as shown in Figure~\ref{Gamma2p}. 
\begin{figure}[h!]
    \centering
    \includegraphics[width=4.0in]{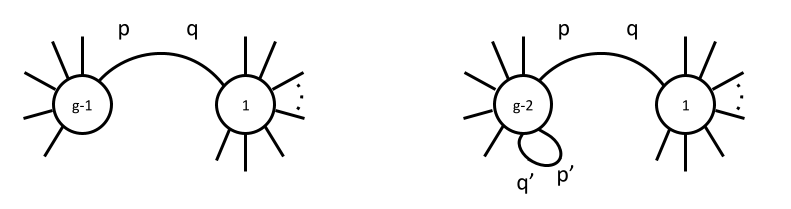}
    \caption{The graph $\Gamma$ on the left and $\Gamma'$ on the right. 
    }
    \label{Gamma2p}
\end{figure}As in \eqref{eq:Kuenneth}, we know that $H^{11}(\Mb_{\Gamma'})$ contains $H^{11}(\Mb_{1,A^c \cup q})$ as a summand. Then $\beta$ maps $H^{11}(\Mb_{\Gamma})$ injectively into this summand of $H^{11}(\Mb_{\Gamma'})$, and it follows that $\beta$ is injective, as claimed. \qed

\subsection{Proof of Theorem \ref{H11} for $g = 2$} \label{sec:g=2}

The proof that $H^{11}(\Mb_{g,n}) = 0$ for all $n$ follows a similar strategy to the inductive argument for $g\geq 3$. By Lemma \ref{0to1}, we know that $\alpha$ is injective, and we claim that $\beta$ is also injective.  The theorem follows from this claim.  However, the proof that $\beta$ is injective is more involved in this case.

We begin by describing some of the components of $\beta$ as concretely as possible.

\begin{example} Suppose $\Gamma$ is the leftmost graph in Figure \ref{graphs2}.
In this case, $\phi_1(\Gamma)$ and $\phi_2(\Gamma)$ are not isomorphic, so each component of $\beta|_{H^{11}(\Mb_\Gamma)}$ is one of the usual restriction maps, up to sign.
\end{example}

\begin{example}\label{example2} Suppose $\Gamma$ is the rightmost graph in Figure \ref{graphs2}. 
In this case, $\phi_1(\Gamma) \cong \phi_2(\Gamma)$. Let $\sigma\colon \Mb_{\Gamma} \to \Mb_{\Gamma}$ be the automorphism that simultaneously swaps $p_1$ with $p_2$ and $q_1$ with $q_2$ (corresponding to the automorphism of $\Gamma$ swapping the two edges). Then $\xi_2 = \xi_1 \circ \sigma$. By Proposition \ref{therep}, we know $\sigma^*\colon H^{11}(\Mb_{\Gamma}) \to H^{11}(\Mb_{\Gamma})$ is multiplication by $-1$. 
It follows that $\beta|_{H^{11}(\Mb_{\Gamma})}$ is again one of the usual restriction maps, up to rescaling. 
\end{example}

We will show that, for each $1$-edge graph $\Gamma$ there is a collection of $2$-edge graphs $\{\Gamma_i\}$ such that $\beta$ maps $H^{11}(\Mb_{\Gamma})^{\Aut(\Gamma)}$ injectively into $\bigoplus_i (H^{11}(\Mb_{\Gamma_i}) \otimes \det(E_{\Gamma_i}))^{\Aut(\Gamma_i)}$.  Moreover, we order the $1$-edge graphs in such a way that, at each step, none of the $2$-edge graphs $\{\Gamma_i\}$ admit edge contractions to any of the $1$-edge graphs that came earlier in the order.  In this way, we see that $\beta$ can be represented by a block upper diagonal matrix with injective blocks, and hence is injective, as required.

\medskip

For $n \leq 9$, the cohomology $H^*(\Mb_{2,n})$ is tautological \cite{CL-CKgP}, and hence $H^{11}(\Mb_{2,n}) = 0$, as required. The cases $n = 10$ and $11$ can be handled by an argument similar to that used for the cases $n \geq 12$, below, but the details are more involved.  Instead, we note that $\Mb_{2,n}$ is rational in these cases \cite{Casnati}, and hence, $H^{11,0}(\Mb_{2,n}) = 0$. By Proposition \ref{s12}, it follows that  $H^{11}(\Mb_{2,n}) = 0$. For the remainder of this subsection, we therefore assume $n \geq 12$. 

There are three types of $1$-edge graphs to consider: those with a vertex of genus $2$, those with a unique vertex of genus $1$ and a self-edge, and those with two vertices of genus $1$. 

\subsubsection{Graphs with a genus $2$ vertex}
Let $\Gamma$ be a graph with one edge and a genus $2$ vertex. Then $\Mb_{\Gamma}  = \Mb_{2,A \cup p} \times \Mb_{0, A^c \cup q}$. Because $|A \cup p| < n$, we see that $H^{11}(\Mb_{2,A \cup p}) = 0$ by induction. By the K\"unneth formula and the vanishing of lower degree odd cohomology, $H^{11}(\Mb_{\Gamma})=0$.

\subsubsection{Graphs with a single genus 1 vertex} \label{in2} 
Let $\Gamma$ be the graph with one vertex of genus $1$, $n$ legs, and one self-edge.
Consider $\Mb_{\Gamma} = \Mb_{1,n+2}$ labeled $\{1, \ldots, n, p_1, q_1\}$ where $p_1, q_1$ are glued together to get curves with dual graph $\Gamma$. 
As in Section \ref{ga}, given a subset $B \subset \{1, \ldots, n, p_1, q_1\}$ of cardinality $10$, we let $D_B = \Mb_{1,B \cup p_2} \times \Mb_{0, B^c \cup q_2}$. (This is nonempty because $n \geq 12$.)
There are four flavors of subsets $B$:
both of $p_1, q_1$ are in $B$; neither $p_1$ nor $q_1$ is in $B$; only $p_1$ is in $B$; or only $q_1$ is in $B$.
These correspond to $4$ types of codimension $1$ strata under the map $D_B \to \Mb_{1,n+2}$ that glues $p_2$ and $q_2$ (see Figure \ref{graphs1}).
\begin{figure}[h!]
    \centering
    \includegraphics[width=5.75in]{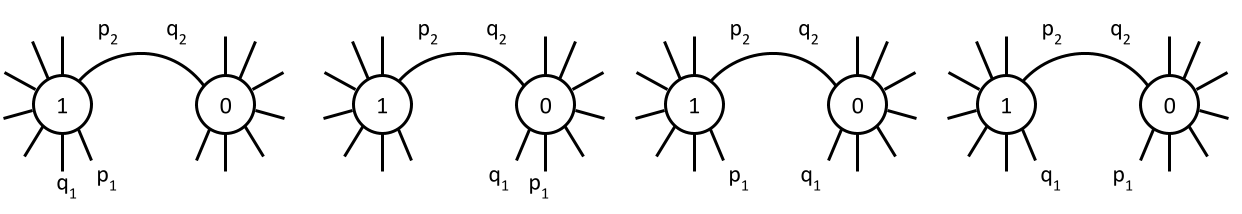}
    \caption{The four flavors of $D_B$.}
    \label{graphs1}
\end{figure}

For each subset $B$, let $\Gamma_B$ be the two-edge graph obtained by further gluing $p_1$ and $q_1$ (see Figure \ref{graphs2}). Note that $\Mb_{\Gamma_B} = D_B$. We want to show that
\[
H^{11}(\Mb_{\Gamma})^{\Aut(\Gamma)}\rightarrow \bigoplus_{|B|=10} H^{11}(\Mb_{\Gamma_B}\otimes \det E(\Gamma_B))^{\Aut(\Gamma_B)}
\]
is injective.

When we glue $p_1$ and $q_1$, the last two types of subsets $B$ give the same type of $2$-edge graph $\Gamma_B$: 
\begin{figure}[h!]
    \centering
    \includegraphics[width=5.75in]{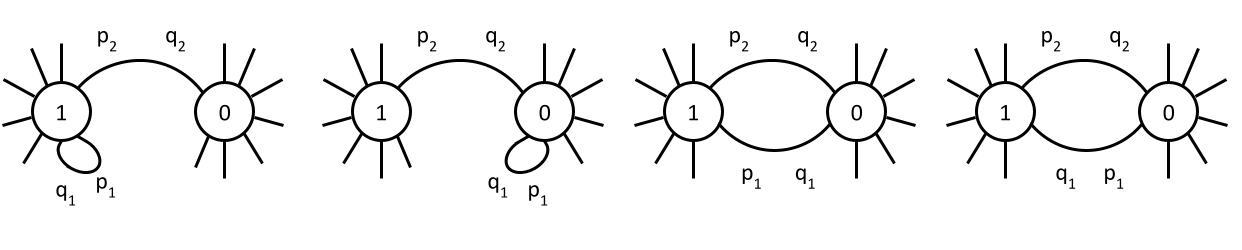}
    \caption{Gluing $p_1$ and $q_1$.}
    \label{graphs2}
\end{figure}

\noindent
Swapping $p_1$ and $q_1$ preserves the first two maps $D_B \to \Mb_{1,n+2}$ but exchanges the other two. Correspondingly, the edge representation is trivial in the first two cases and non-trivial in cases three and four.

By Corollary~\ref{cor:inj}, we have an injection
\[H^{11}(\Mb_{1,n+2}) \hookrightarrow \bigoplus_{\substack{|B| = 10}} H^{11}(D_B).\]
Now, $\ss_2$ acts on both sides by swapping $p_1$ and $q_1$. Taking $\ss_2$-invariants of both sides, we have an injection from $H^{11}(\Mb_{1,n+2})^{\ss_2}$ into 
\[ \bigoplus_{\substack{\{p_1, q_1\} \subset B \text{ or } \{p_1,q_1\} \subset B^c \\ |B| = 10} } H^{11}(D_B)^{\ss_2} \oplus \bigoplus_{\substack{p_1 \in B \text{ and } q_1 \notin B \\ |B| = 10}}[H^{11}(D_B) \oplus H^{11}(D_{(B\smallsetminus p_1) \cup q_1})]^{\ss_2}.\]
The first collection of terms is $(H^{11}(\Mb_{\Gamma_B}) \otimes \det E(\Gamma_B))^{\ss_2}$ for the graphs $\Gamma_B$ of the first and second flavor in Figure \ref{graphs2}, which have trivial edge representation.
For the terms in square brackets, we have $H^{11}(D_B) \cong H^{11}(D_{(B\smallsetminus p_1)\cup q_1})$ and the $\ss_2$-action switches the two factors. Hence, the space of $\ss_2$-invariants is $H^{11}(\Mb_{\Gamma_B})$. 
When $p_1 \in B$ and $q_1 \notin B$, note that $\Gamma_B$ is the type of graph considered in Example \ref{example2}. In particular $\Aut(\Gamma_B)$ acts by the sign representation, so
$(H^{11}(\Mb_{\Gamma_B}) \otimes \det E(\Gamma_B))^{\Aut(\Gamma_B)} = H^{11}(\Mb_{\Gamma_B})$.
In summary, we have given an injection
 \begin{equation} \label{summary} H^{11}(\Mb_{\Gamma})^{\Aut(\Gamma)} = H^{11}(\Mb_{1,n+2})^{\ss_2} \hookrightarrow \bigoplus_{|B| = 10} (H^{11}(\Mb_{\Gamma_B}) \otimes \det E(\Gamma_B))^{\Aut(\Gamma_B)}. 
 \end{equation}

\subsubsection{Graphs with two genus $1$ vertices}
Suppose $\Gamma$ has two genus $1$ vertices, so \[\Mb_{\Gamma} = \Mb_{1, A \cup p} \times \Mb_{1, A^c \cup q}.\] 
We may assume $|A^c| \leq |A|$. Figure \ref{graphs3} shows three types of graphs that will be used in the three cases of the argument. Note that none of them have a loop, so none admit edge contractions to the $1$-edge graphs of Section \ref{in2}.

\begin{figure}[h!]
    \centering
    \includegraphics[width=6.4in]{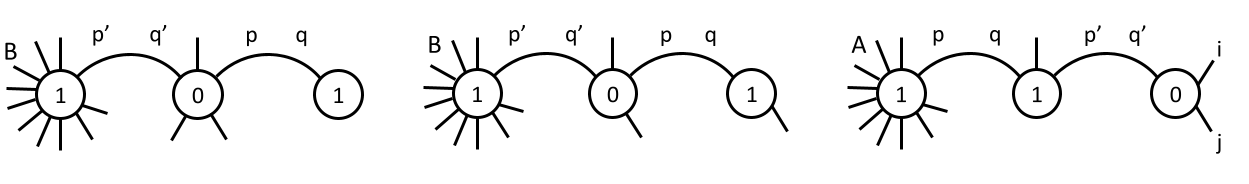}
    \caption{The graphs in Cases 1, 2, and 3.}
    \label{graphs3}
\end{figure}

\textit{Case 1: $|A^c| = 0$.} By Corollary~\ref{cor:inj}, there is an injection
\[H^{11}(\Mb_{\Gamma}) = H^{11}(\Mb_{1,A \cup p}) \hookrightarrow \bigoplus_{\substack{B \subset A \\ |B| = 10}} H^{11}(\Mb_{1, B \cup p'}) = H^{11}(\Mb_{\Psi_B}),\]
where $\Psi_B$ is the first graph in Figure \ref{graphs3}, which has
\[\Mb_{\Psi_B} = \Mb_{1, B \cup p'} \times \Mb_{0, (A \smallsetminus B) \cup \{q',p\}} \times \Mb_{1, \{q\}}.\]

\medskip
\textit{Case 2: $|A^c| = 1$.}
Again, by Corollary~\ref{cor:inj}, there is an injection
\[H^{11}(\Mb_{\Gamma}) = H^{11}(\Mb_{1,A \cup p}) \hookrightarrow \bigoplus_{\substack{B \subset A \\ |B| = 10}} H^{11}(\Mb_{1, B \cup p'}) = H^{11}(\Mb_{\Psi_B'}),\]
where this time $\Psi_B'$ is the second graph in Figure \ref{graphs3}, which has
\[\Mb_{\Psi_B'} = \Mb_{1, B \cup p'} \times \Mb_{0, (A \smallsetminus B) \cup \{q',p\}} \times \Mb_{1, A^c \cup q}.\]
Note that our assumptions $n \geq 12$ and $|A^c| = 1$ ensure $|A \smallsetminus B| \geq 1$, so the middle genus $0$ vertex has at least $3$ markings. Note that no edge contraction of $\Psi_B'$ gives a $1$-edge graph $\Gamma$ of the type in Case 1 or a different $\Gamma$ of the type in Case 2.

\medskip

\textit{Case 3: $|A^c| \geq 2$.}
Choose any $i, j \in A^c$ and $k, \ell \in A$  and define 
\[X_{A^c \cup q} = \Mb_{1,(A^c \smallsetminus \{i,j\}) \cup \{q, p'\}} \times \Mb_{0, \{q',i,j\}} \qquad \text{and} \qquad X_{A \cup p} = \Mb_{1,(A \smallsetminus \{k,\ell\}) \cup \{p, p'\}} \times \Mb_{0, \{q',k,\ell\}}.\]
Associated to these are $2$-edge graphs $\Phi_A$ and $\Phi_{A^c}$ so that
\[\Mb_{\Phi_A} = \Mb_{1, A \cup p} \times X_{A^c \cup q} \qquad \text{and} \qquad \Mb_{\Phi_{A^c}} = X_{A \cup p} \times \Mb_{1, A^c \cup q}\]
and
contracting the edge connecting $p'$ and $q'$ gives $\Gamma$. See the last graph in Figure \ref{graphs3} for a picture of $\Phi_A$. Notice that no edge contraction of $\Phi_A$ or $\Phi_{A^c}$ gives a $1$-edge graph appearing in Cases 1 or 2 above or a different $1$-edge graph of the type in Case 3.

By the K\"unneth formula, 
\begin{align*}
H^{11}(\Mb_{\Gamma}) &=H^{11}(\Mb_{1,A\cup p})\oplus H^{11}(\Mb_{1,A^c\cup q}),
\\
H^{11}(\Mb_{\Phi_A})&=H^{11}(\Mb_{1,A\cup p})\oplus H^{11}(\Mb_{1,(A^c\smallsetminus \{i,j\})\cup \{q,p'\}}),
\intertext{
and 
}
H^{11}(\Mb_{\Phi_{A^c}})&=H^{11}(\Mb_{1,A^c\cup q})\oplus H^{11}(\Mb_{1,(A\smallsetminus \{k,\ell\})\cup \{p,p'\}}).
\end{align*}
It follows that $H^{11}(\Mb_{\Gamma})$ injects into the sum
\begin{equation*}
H^{11}(\Mb_{\Phi_A})\oplus H^{11}(\Mb_{\Phi_{A^c}}).
\end{equation*}

\vspace{-.25in}
\qed
\medskip

\begin{rem}\label{alternate2}
It should also be possible to deduce the vanishing of $H^{11}(\Mb_{2,n})$ from the work of Dan Petersen as follows. There are exact sequences
\[
H^{k-2}(\Mb_{1,n+2})(-1)\rightarrow H^k(\Mb_{2,n})\rightarrow W_k H^k(\M^{\mathrm{ct}}_{2,n})\rightarrow 0.
\]
Specializing to $k=11$, we see that 
\[
H^{11}(\Mb_{2,n})\cong W_{11} H^{11}(\M^{\mathrm{ct}}_{2,n}).
\]
By \cite[Theorem 2.1 and Remark 2.2]{Petersen}, there is an isomorphism \[
H^{11}(\M^{\mathrm{ct}}_{2,n})\cong \bigoplus_{p+q=11} H^{p}(\M_2^{\mathrm{ct}},A^q)\oplus H^{p}(\Sym ^2 \M_{1,1}, B^q)
\]
where $A^q$ and $B^q$ are certain direct sums of Tate twists of symplectic local systems. The cohomology of these local systems has been determined by Petersen in \cite{Petersenlocalsystems}. Petersen's work makes significant use of high-powered machinery, including mixed Hodge modules, perverse sheaves, the decomposition theorem for the map $\M_{2,n}^{\mathrm{ct}}\rightarrow \M_{2}^{\mathrm{ct}}$, and the Eichler--Shimura isomorphism concerning modular forms. The proof we present here is relatively elementary and highlights a  combinatorial perspective on the vanishing of $H^{11}(\Mb_{2,n})$.
\end{rem}

\section{Application to point counting}\label{pointcounts}
The weighted count of $\mathbb{F}_q$ points on a Deligne--Mumford stack $X$ is 
\[
\#X(\mathbb{F}_q):=\sum_{x\in |X(\mathbb{F}_q)|} \frac{1}{\# \Aut (x)},
\]
where $|X(\mathbb{F}_q)|$ denotes the set of isomorphism classes of the groupoid $X(\mathbb{F}_q)$. This point count is related to the trace of the Frobenius map $\Phi_q$ on cohomology by Behrend's Grothendieck--Lefschetz trace formula \cite[Theorem 3.1.2]{Behrend}:
\[
\#X(\mathbb{F}_q)=q^{\dim X} \sum_{k\geq 0} (-1)^k \tr \Phi_q | H^k_{\text{\'et}}(X_{\overline{\mathbb{F}}_q},\qq_{\ell}).
\]
Using this formula in the case $X=\Mb_{g,n}$ leads to the proof of Corollary \ref{almost}.
\begin{proof}[Proof of Corollary \ref{almost}]
Let $d = \dim \Mb_{g,n} = 3g - 3 + n$.
The eigenvalues of Frobenius acting on $H^k_{\text{\'et}}((\Mb_{g,n})_{\overline{\mathbb{F}}_q}, \qq_{\ell})$ are Weil numbers of weight $k$, meaning that under any embedding $\overline{\mathbb{Q}}_\ell \hookrightarrow \cc$ they have absolute value $q^{k/2}$ \cite{DeligneWeil}. Thus, the point count $\#\Mb_{g,n}(\mathbb{F}_q)$ is determined up to $O(q^{d - 13/2})$ by the eigenvalues of Frobenius on $H^{2d-k}(\Mb_{g,n})$ for $k \leq 12$. Theorem \ref{even} and Poincar\'e duality tells us that Frobenius acts on the even cohomology $H^{2d - k}(\Mb_{g,n})$ for $k \leq 12$ by $q^{d - k/2}$.
Moreover, the groups $H^{2d-k}(\Mb_{g,n})$ vanish for odd $k \leq 9$ by \cite{ArbarelloCornalba, BergstromFaberPayne}.
Thus, up to $O(q^{d-13/2})$, the only other contribution to $\#\Mb_{g,n}(\mathbb{F}_{q})$ is from $H^{2d-11}(\Mb_{g,n})$, which vanishes when $g \geq 2$ by Theorem \ref{H11} and Poincar\'e duality.
\end{proof}

\section{Higher odd cohomology groups}\label{higherodd}
In this section, we prove Theorem \ref{higher}. The main tool is the push-pull formula. This formula is proven for manifolds in \cite[Corollary 2.2]{Yamaguchi}. The proof for orbifolds or smooth Deligne--Mumford stacks goes through analogously.
\begin{lem}\label{pushpull}
Suppose $i\colon X\hookrightarrow Y$ is a closed embedding of codimension $d$ between smooth Deligne--Mumford stacks. Let $N_{X/Y}$ denote the normal bundle. Then for any cohomology class $\alpha\in H^*(X)$
\[
i^*i_*(\alpha)=c_d(N_{X/Y})\cup \alpha.
\]
\end{lem}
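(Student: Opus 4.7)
The plan is to reduce the identity to Yamaguchi's manifold version by passing to an étale atlas. First I would choose a smooth étale presentation $R \rightrightarrows U$ of the stack $Y$, and let $U' \subset U$ and $R' \subset R$ denote the preimages of $X$; these are closed embeddings of smooth schemes (or, analytically, smooth manifolds) of codimension $d$, and their normal bundles pull back from $N_{X/Y}$. On each of $U$ and $R$, Yamaguchi's result \cite[Corollary 2.2]{Yamaguchi} gives $i_U^* i_{U*}(\alpha|_{U'}) = c_d(N_{U'/U}) \cup \alpha|_{U'}$, and likewise on $R$.

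To descend the identity from the atlas to $Y$, I would use that the rational (resp.\ $\ell$-adic) cohomology of the smooth DM stack $X$ is the equalizer of the two pullbacks $H^*(U') \rightrightarrows H^*(R')$, together with the fact that Gysin pushforward, pullback in cohomology, cup product, and Chern classes of vector bundles all commute with étale base change. Compatibility of the two sides of Yamaguchi's formula with the two projections $R \rightrightarrows U$ then forces the identity to descend to $H^*(X)$.

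The main obstacle, and the only step that is not essentially formal, is verifying that the Gysin pushforward $i_*$ for a closed embedding of smooth DM stacks behaves well under étale base change; the corresponding assertions for $i^*$, $\cup$, and $c_d$ are classical. An alternative, essentially equivalent route is to argue analytically on local orbifold charts: $Y$ is locally modeled by a smooth manifold $\tilde Y$ modulo a finite group $G$ preserving a smooth submanifold $\tilde X$ of codimension $d$, Yamaguchi's identity holds $G$-equivariantly on $\tilde Y$, and taking $G$-invariants yields the lemma on each chart before gluing globally.
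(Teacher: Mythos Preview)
Your proposal is correct in spirit and arrives at the same place as the paper, but the paper's treatment is considerably terser: it does not give a proof at all, only the sentence preceding the lemma, ``This formula is proven for manifolds in \cite[Corollary 2.2]{Yamaguchi}. The proof for orbifolds or smooth Deligne--Mumford stacks goes through analogously.'' In other words, the paper asserts that Yamaguchi's argument itself carries over verbatim to the stack setting, rather than reducing the stack statement to the manifold statement via descent as you do.

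Your reduction-to-atlas approach is a legitimate alternative, and your identification of the one nontrivial step---compatibility of Gysin pushforward with \'etale base change---is exactly the point that the paper's one-line remark glosses over. One small technical caveat: the claim that $H^*(X)$ is literally the equalizer of $H^*(U') \rightrightarrows H^*(R')$ is stronger than what holds in general (what one has is a descent spectral sequence). For the argument you actually need only the weaker fact that pullback $H^*(X;\qq) \to H^*(U';\qq)$ along a smooth \'etale atlas is \emph{injective} for rational coefficients, which is standard for Deligne--Mumford stacks; since both sides of the identity already live in $H^*(X)$, injectivity suffices to conclude equality once you have checked it on the atlas. Your alternative route through local orbifold charts and $G$-invariants is closer to what the paper has in mind by ``goes through analogously.''
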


\begin{proof}[Proof of Theorem \ref{higher}]
Set $g=1 + \sum_{i=1}^{k} g_i$. Assume $\{g_i\}$ is ordered so that $g_1<\dots<g_k$.
It suffices to prove the result for $\Mb_{g,11-k}$, as the pull back maps 
\[
H^{i}(\Mb_{g,n})\rightarrow H^{i}(\Mb_{g,n+1})
\]
are injective for all $n$.
Consider the gluing morphism
\[
\xi\colon \Mb_{1,11}\times \Mb_{g_1,1}\times \cdots \times \Mb_{g_k,1} \rightarrow \Mb_{g,11-k}
\]
attaching the marked point on the $\Mb_{g_i,1}$ component to the $i$th marked point on $\Mb_{1,11}$. Let $\alpha\in H^{11,0}(\Mb_{1,11})$ be a nonzero holomorphic $11$-form. We will show that 
\[
\xi_*(\alpha\otimes 1\otimes \dots\otimes 1)\neq 0 \in H^{11+2k}(\Mb_{g,11-k}).
\]
It suffices to show that $\xi^*\xi_*(\alpha\otimes 1\otimes \cdots\otimes 1)$ is nonzero. If $g_1\neq 1$, set
\[
U:=\M_{1,11}\times \M_{g_1,1}\times \cdots \times \M_{g_k,1}.
\] 
If $g_1=1$, set 
\[
U:=\M_{1,11}\times \Mb_{1,1}\times \M_{g_2,1}\times\cdots \M_{g_k,1}.
\]
We will show that
\[
\xi^*\xi_*(\alpha\otimes 1\otimes \cdots\otimes 1)|_{U}\neq 0.
\]
Let $\Gamma_{g,11-k}$ denote the stable graph with $k+1$ vertices as follows. One vertex is of genus $1$ with $11-k$ half-edges, and the rest of the vertices are of genus $g_i$. There is exactly one edge between the first vertex and each of the latter vertices, and no other edges, unless $g_1=1$ in which case we allow ourselves to replace the $g_1$ vertex with a genus $0$ vertex with a self-edge (see Figure \ref{possibilities}).
\begin{figure}[h!]
    \centering
    \includegraphics[width=4.5in]{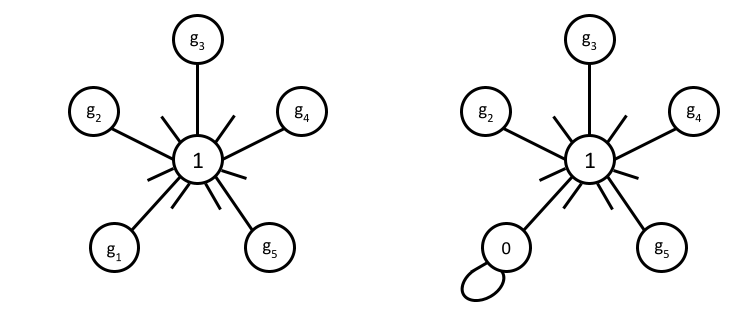}
    \caption{The graphs $\Gamma_{g,11-k}$.}
    \label{possibilities}
\end{figure}
Let $V_{g,11-k}$ denote the open substack of $\Mb_{g,11-k}$ parametrizing curves whose dual graphs are obtained from $\Gamma_{g,11-k}$ by edge contraction.
Then \[\xi|_{U}\colon U\rightarrow \Mb_{g,11-k}\]
factors through the open substack $V_{g,11-k}$. The induced map
\[
i\colon U\hookrightarrow V_{g,11-k}
\]
is a closed embedding between smooth stacks of codimension $k$. Here, we are using that the genera $g_1,\dots,g_k$ are distinct. Let $\beta$ denote the restriction of $\alpha$ to $\M_{1,11}$, which is nonzero (for example, by \cite{Getzler3}). By Lemma \ref{pushpull} 
\[
i^*i_*(\beta\otimes 1\otimes \cdots\otimes 1)=c_k(N_i)\cup (\beta\otimes 1\otimes \cdots \otimes 1).
\]
Let $\psi_{g_j}$ denote the (unique) $\psi$ class on the $(j+1)$st component of $U$. Let $\psi_j$ denote the $j$th $\psi$ class on $\M_{1,11}$. 
It is well-known that 
\[
c_k(N_i)=\prod_{j=1}^k(-\psi_j\otimes 1\otimes \cdots\otimes 1-1\otimes \cdots\otimes \psi_{g_j}\otimes\cdots\otimes 1).
\]
See, for example, the discussion in \cite[Section A.4]{GraberPandharipande}.
Because $H^{13}(\M_{1,11})=0$ \cite{Getzler3}, all products of the form $\beta\psi_{j}$ vanish. Therefore,
\[
c_k(N_i)\cup (\beta\otimes 1\otimes \cdots\otimes 1)=(-1)^{k}(\beta \otimes \psi_{g_1}\otimes \cdots\otimes \psi_{g_k})\neq 0. \qedhere
\]

\end{proof}
\section{Application to Chow rings}\label{Chow}
We denote by $A^*(X)$ the Chow ring of $X$ with $\qq$-coefficients.
\begin{thm}[Kimura \cite{Kimura}, Totaro \cite{TotaroCKgP}]\label{cycleclass}
Suppose that $X$ is a smooth, proper Deligne--Mumford stack over $\cc$. If $A^*(X)$ is a countable $\qq$-vector space, then the cycle class map
\[
A^*(X)\rightarrow H^{*}(X)
\]
is an isomorphism.
\end{thm}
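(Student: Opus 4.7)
The strategy is to deduce from the countability of $A^*(X)$ an algebraic decomposition of the diagonal class $[\Delta_X] \in A^{\dim X}(X \times X)$, and then use this decomposition to conclude that the cycle class map is an isomorphism. Because $[\Delta_X]$ induces the identity correspondence on $H^*(X)$, a decomposition of the form
\[
[\Delta_X] \;=\; \sum_i \alpha_i \times \beta_i, \qquad \alpha_i \in A^{d-k_i}(X),\ \beta_i \in A^{k_i}(X), \quad d = \dim X,
\]
forces every cohomology class to be algebraic, and forces every Chow class that dies in cohomology to be zero, since both Chow and cohomology classes are recovered from the $\qq$-valued pairings $\gamma \mapsto (\deg_X(\alpha_i \cdot \gamma))_i$.

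First I would reduce to the case of a smooth projective variety. Since $X$ is a smooth proper Deligne--Mumford stack over $\cc$ and we work with $\qq$-coefficients, one may pass to a smooth projective Galois cover $X' \to X$, apply the classical statement there, and descend using the idempotent associated to the covering group; the hypothesis of countable Chow is inherited by $X'$, and the compatibility of $\cl$ with pullback and pushforward gives the conclusion on $X$.

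The core of the argument is an iterated Bloch--Srinivas style decomposition. The starting point is the Mumford--Roitman principle: if $A_0(X)$ is a countable $\qq$-vector space, then the pushforward $A_0(Y) \to A_0(X)$ is surjective for some proper closed subvariety $Y \subsetneq X$, since otherwise one could spread points out over a one-parameter family on $X$ and produce uncountably many linearly independent classes in $A_0(X)$. Bloch--Srinivas then yields a decomposition $[\Delta_X] = \Gamma_1 + \Gamma_2$ in $A^d(X \times X)$ with $\Gamma_1$ supported on $X \times Y$ and $\Gamma_2$ supported on $D \times X$ for some divisor $D \subset X$. Replacing $Y$ and $D$ by smooth alterations and using the localization sequence to see that the Chow groups of these auxiliary varieties remain countable, one iterates the Bloch--Srinivas step, bootstrapping through the codimension filtration. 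A finite induction on $\dim X$ then produces the desired full decomposition $[\Delta_X] = \sum_i \alpha_i \times \beta_i$.

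The main obstacle I expect is propagating countability through this induction: each Bloch--Srinivas step replaces $X$ with a lower-dimensional subvariety (and its resolution), and one must verify that the Chow groups of these auxiliary varieties remain countable in every codimension. This is exactly where one uses the full strength of the hypothesis that \emph{every} $A^i(X)$ is countable, not merely $A_0(X)$, together with the excision sequences relating the Chow groups of $X$ to those of a closed subvariety and its open complement. Once the decomposition of $\Delta_X$ is in place, the identity $\gamma = \sum_i \beta_i \, \deg_X(\alpha_i \cdot \gamma)$ in $A^*(X)$, together with its image under $\cl$ in $H^*(X)$, immediately yields both the injectivity and the surjectivity of the cycle class map.
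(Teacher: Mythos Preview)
Your decomposition-of-the-diagonal strategy is a reasonable alternative and lies in the same circle of ideas as what underpins Totaro's theorem, but the reduction step to smooth projective varieties contains a genuine error. You assert that countability of $A^*(X)$ passes to a smooth projective Galois cover $X' \to X$; this is false. Take $X = [E/(\pm 1)]$ for an elliptic curve $E$ with its involution, and $X' = E$. Then $A^*(X)_{\qq} = (A^*(E)_{\qq})^{\pm 1} \cong \qq \oplus \qq$, since the involution acts by $-1$ on $\Pic^0(E)_{\qq}$ and hence kills it after taking invariants; but $A_0(E)_{\qq}$ is uncountable. There is also the prior issue that an arbitrary smooth proper Deligne--Mumford stack need not admit a finite cover by a smooth scheme at all. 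Your argument could plausibly be repaired by running the iterated Bloch--Srinivas decomposition directly on the stack, but as written the reduction fails, and the propagation of countability through the induction (which you yourself flag as the main obstacle) is left as a sketch.

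The paper takes a different and cleaner route that avoids any such reduction. It spreads $X$ out over a countable subfield $E \subset \cc$, uses the countability of $A^*(X_{\cc})$ to produce a countable extension $F/E$ with $A^*(X_F) \to A^*(X_{\cc})$ surjective, and then observes via Kimura's specialization injectivity that $A^*(X_F) \to A^*(X_G)$ is an isomorphism for every finitely generated extension $G/F$. This is precisely the hypothesis of Totaro's theorem, which then gives that the motive of $X_F$, and hence of $X_{\cc}$, is pure Hodge--Tate. The paper explicitly notes that Totaro's argument, though stated for schemes, goes through for Deligne--Mumford stacks, so no passage to a scheme cover is needed.
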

\begin{proof}

First, note that $X$ is defined over a subfield $E \subset \cc$ that is finitely generated over $\qq$ and hence countable. Suppose $A^*(X)$ is countable. Then there is a countable extension $F$ of $E$ such that $A^*(X_{F})\rightarrow A^*(X)$ is surjective. Let $G$ be a finitely generated extension of $F$. Then $G$ can be embedded in $\cc$, so we have a morphism
\[
A^*(X_F)\rightarrow A^*(X_G)\rightarrow A^*(X).
\]
Each of the maps above is injective by
\cite[Proposition 3.2]{Kimura}.
Because the composite is surjective, it follows that the first morphism is also surjective. By \cite[Theorem 4.1]{TotaroCKgP}, it follows that the motive of $X_F$ is pure Hodge--Tate, and thus the motive of $X$ is as well. Note that \cite[Theorem 4.1]{TotaroCKgP} is stated for schemes, but the same proof goes through for Deligne--Mumford stacks. In particular, the cycle class map is an isomorphism.
\end{proof}
\begin{proof}[Proof of Corollary \ref{nontautological}]
The tautological ring $R^*(\Mb_{g,n})$ is a finite dimensional $\qq$-vector space by \cite[Corollary 1]{GraberPandharipande}. By Theorem \ref{cycleclass}, we know that $A^*(\Mb_{g,n})$ is uncountable whenever there is odd cohomology. Therefore, the quotient $A^*(\Mb_{g,n})/R^*(\Mb_{g,n})$ is uncountable for the values of $(g,n)$ in Theorem \ref{higher}. It follows that the subgroup of $A^*(\Mb_{g,n})$ generated by cycles algebraically equivalent to zero is also uncountable, since the Hilbert scheme has only countably many connected components.
\end{proof}

\bibliographystyle{amsplain}
\bibliography{refs}
\end{document}